\newcommand{\ds}{\displaystyle}
\newcommand{\R}{\mathbb R}
\newcommand{\RtE}{RRE}
\newcommand{\RE}{\varphi_{\gamma}^{\texttt{Ryu}}}
\newcommand{\norm}[1]{{\left\|{#1}\right\|}}
\newcommand{\inner}[2]{{\left\langle {#1},{#2}\right\rangle}}
\newcommand{\tophi}{\xrightarrow[\varphi]{}}
\DeclareMathOperator{\prox}{prox}
\DeclareMathOperator*{\argmin}{argmin}
\DeclareMathOperator{\fix}{Fix}
\DeclareMathOperator{\dom}{dom}
\newtheorem{assumption}{Assumption}
\begin{document}
\title*{A relaxed version of Ryu's three-operator splitting method for structured nonconvex optimization}
\titlerunning{Modified Ryu splitting for nonconvex optimization}
\author{Jan Harold Alcantara and Felipe Atenas}
\institute{Jan Harold Alcantara \at Center for Advanced Intelligence Project, RIKEN, 1-4-1 Nihombashi, Chuo City, Tokyo, 103-0027, Japan \email{janharold.alcantara@riken.jp}
\and Felipe Atenas \at School of Mathematics and Statistics, The University of Melbourne,  813 Swanston St, Parkville, Melbourne, VIC, 3052, Australia \email{felipe.atenas@unimelb.edu.au}}
%
%
\maketitle

\abstract{In this work, we propose a modification of Ryu's splitting algorithm for minimizing the sum of three functions, where two of them are convex with Lipschitz continuous gradients, and the third is an arbitrary proper closed function that is not necessarily convex. The modification is essential to facilitate the convergence analysis, particularly in establishing a sufficient descent property for an associated envelope function. This envelope, tailored to the proposed method, is an extension of the well-known Moreau envelope. Notably, the original Ryu splitting algorithm is recovered as a limiting case of our proposal. The results show that the descent property holds as long as the stepsizes remain sufficiently small. Leveraging this result, we prove global subsequential convergence to critical points of the nonconvex objective.
}

\section{Introduction}
\label{sec:1}

Modern optimization applications often exhibit a structured form that is well-suited to the application of decomposition techniques.  Operator splitting methods have emerged as powerful tools for efficiently solving complex optimization problems. Instances of operator splitting methods include the forward-backward splitting method \cite{Passty79} and the Douglas-Rachford splitting method \cite{DR,LM}. These algorithmic schemes have been pivotal in the development of techniques for image reconstruction, signal processing, and machine learning \cite{combettes2007douglas,cai2010split,combettes2011proximal,boyd2011distributed,glowinski2017splitting}.

The aforementioned methods were originally designed to solve maximal monotone problems with a two-block structure. In the context of optimization problems, it translates to the sum of two convex functions. Several generalizations to three-block problems have been proposed in the literature, such as the Davis-Yin operator splitting method \cite{davis2017three}, Ryu's three-operator splitting method \cite{ryu2020uniqueness}, and the Malitsky-Tam operator splitting method \cite{MT23} (which is also applicable to $n$-block problems). All these methods, in the context of optimization, have provable convergence guarantees for convex optimization problems. 


The literature is relatively scarce for methods that solve nonconvex optimization problems for the sum of three functions. Notable contributions in this direction include the Davis-Yin splitting to nonconvex problems analyzed in \cite{bian2021three}, the extension of Davis-Yin to four-operator splitting investigated in \cite{ALT24}, and an extension of Douglas-Rachford splitting examined in \cite{AT25} (that also works for $n$-operators). In this work, we propose a modification of Ryu's splitting method 
to solve nonconvex three-block optimization problems with a specific structure.

Consider the optimization problem \begin{equation} \label{primal-problem}
\min_{x\in \R^d}~\varphi(x) \coloneqq  f_1(x) + f_2(x) + f_3(x),
\end{equation} 
where $f_i: \R^d \to \R\cup \{ +\infty\}$ for $i=1,2,3$. 
Given $z_1^0,z_2^0 \in \R^d$, $\gamma >0$ and $\lambda >0$,  the (relaxed) \emph{Ryu's three-operator splitting method} \cite{ryu2020uniqueness} is given by the following iteration: for {$k\geq 0$},
\begin{equation} \label{Ryu:iteration}
    \left\{\begin{array}{rcl}
        x_1^k & = &\prox_{\gamma f_1}(z_1^k)  \\
        x_2^k & = & \prox_{\gamma f_2}(z_2^k + x_1^k) \\
        x_3^k & \in & \prox_{\gamma f_3}\big(x_1^k - z_1^k + x_2^k - z_2^k\big)\\
        \begin{pmatrix}
            z_1^{k+1} \\ z_2^{k+1}
        \end{pmatrix} & = & \begin{pmatrix}
            z_1^k \\ z_2^k
        \end{pmatrix} + \lambda \begin{pmatrix}
            x_3^k - x_1^k \\ x_3^k - x_2^k
        \end{pmatrix},
    \end{array}\right.
\end{equation}
 where $\prox_{\gamma f}$ denotes the \emph{proximal mapping} of $\gamma f$ defined in \eqref{e:proximal} below.

When the functions $f_1, f_2$ and $f_3$ in problem \eqref{primal-problem} are convex, \cite[Theorem 4]{ryu2020uniqueness} states the convergence of the method to minimizers of problem \eqref{primal-problem}. Our objective in this work is to investigate whether the convergence guarantees can be extended to a nonconvex setting.

In our convergence analysis, we adopt the ``envelope technique'', a strategy that has been employed in the analysis of several splitting algorithms for nonconvex problems \cite{themelis2018forward,themelis2020douglas,ALT24,AT25,Atenas25}. A key challenge, however, lies in the fact that the standard form of Ryu's splitting algorithm is not directly amenable to this type of analysis. {This is primarily because, as will become evident in the analysis, the default formulation induces an envelope that lacks sufficient proximal terms, which are essential for deriving stepsize intervals that ensure sufficient descent. }To address this, we consider the following relaxed variant of Ryu's method: given $z_1^0,z_2^0 \in \R^d$, $\alpha>0$, $\gamma >0$ and $\lambda >0$, we define the iterates for all {$k \geq 0$} as follows:
\begin{equation} \label{Ryu:iteration_relaxed}
    \left\{\begin{array}{rcl}
        x_1^k & = &\prox_{\gamma f_1}(z_1^k)  \\
        x_2^k & = & \prox_{\frac{\gamma}{\alpha} f_2}(\frac{z_2^k}{\alpha} + x_1^k) \\
        x_3^k & \in & \prox_{\gamma f_3}\big(x_1^k - z_1^k + x_2^k - z_2^k\big)\\
        \begin{pmatrix}
            z_1^{k+1} \\ z_2^{k+1}
        \end{pmatrix} & = & \begin{pmatrix}
            z_1^k \\ z_2^k
        \end{pmatrix} + \lambda \begin{pmatrix}
            x_3^k - x_1^k \\ x_3^k - x_2^k
        \end{pmatrix}.
    \end{array}\right.
\end{equation}
In particular, we modify the $f_2$-proximal step in \eqref{Ryu:iteration} to obtain \eqref{Ryu:iteration_relaxed}. When $\alpha=1$, this reduces to Ryu's splitting algorithm.

Throughout this paper, we adopt the following blanket assumptions.

\begin{assumption}[Blanket assumption] \label{a:blanket}
    Suppose the following conditions are satisfied.
    \begin{enumerate}[label=(\alph*)]
        \item For $i=1,2$, the function $f_i: \R^d \to \R$  is a convex $L_i$-smooth function, that is, $\nabla f_i$ is globally Lipschitz continuous with modulus $L_i >0$.
        \item The function $f_3: \R^d \to \R \cup \{+\infty\}$ is proper and lower semicontinuous (lsc for short).
        \item Problem \eqref{primal-problem} has a nonempty set of solutions.
    \end{enumerate}
\end{assumption}

\begin{remark}[On the simplified nonconvex setting]
The convexity in Assumption~\ref{a:blanket}(a) can be relaxed. However, we impose it in our setting to simplify the analysis. Similar results can be obtained when $f_1$ and $f_2$ are merely $L_1$- and $L_2$-smooth, respectively, by employing a strategy similar to that in \cite{themelis2018forward,bian2021three}.
\end{remark}

This paper is organized as follows. In Section~\ref{s:prelim}, we introduce the notation and some variational analysis results we use throughout this paper. In Section~\ref{s:envelope}, we define the merit function we use as the foundation of our analysis $-$ the Ryu's three-operator splitting envelope $-$ and establish key properties relevant to the convergence analysis of the proposed method. Section~\ref{s:convergence} is dedicated to investigating the convergence properties of the method \eqref{Ryu:iteration_relaxed}. In particular, we show that the defined envelope satisfies a sufficient decrease condition, and then we exploit this property to prove that all cluster points of the generated sequence are critical points of the objective function of problem \eqref{primal-problem}. { In Section~\ref{s:numerics}, we illustrate the applicability of the method to solve a nonconvex formulation of the matrix completion/factorization problem.} Finally, in Section~\ref{s:conclusion} we comment on some ongoing works and future research directions.

\section{Preliminaries and notation} \label{s:prelim}

Throughout this paper, $\langle \cdot, \cdot \rangle$ denotes an inner product  in $\R^d$, and $\| \cdot \|$ its induced norm.  We shall make use of the following technical result.

\begin{lemma} \label{lemma:identity} For all $a,b,c,d \in \R^d$, it holds
    $$\|a-b\|^2 - \|a - c\|^2 = - \|b - c\|^2 + 2 \langle  c - b, a - b \rangle.$$
\end{lemma}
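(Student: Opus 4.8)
The plan is to prove this by a direct expansion using the bilinearity and symmetry of the inner product; no deeper structure is required, and the variable $d$ appearing in the quantifier plays no role in the identity. First I would introduce the decomposition $a - c = (a - b) - (c - b)$, so that
\[
\|a-c\|^2 = \|(a-b) - (c-b)\|^2 = \|a-b\|^2 - 2\langle a-b,\, c-b\rangle + \|c-b\|^2.
\]
Subtracting this equality from $\|a-b\|^2$ then yields
\[
\|a-b\|^2 - \|a-c\|^2 = 2\langle a-b,\, c-b\rangle - \|c-b\|^2,
\]
and the claim follows upon noting that $\|c-b\| = \|b-c\|$ and, by symmetry of the inner product, $\langle a-b,\, c-b\rangle = \langle c-b,\, a-b\rangle$.

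As an alternative (and as a sanity check), one can expand each of the three squared norms in the form $\|u-v\|^2 = \|u\|^2 - 2\langle u,v\rangle + \|v\|^2$ and verify that both sides of the asserted equality reduce to $\|b\|^2 - \|c\|^2 + 2\langle a, c\rangle - 2\langle a, b\rangle$. Either route is a one-line computation.

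I do not expect any genuine obstacle here: this is a bookkeeping lemma whose only purpose is to be invoked repeatedly in the descent estimates of Sections~\ref{s:envelope} and~\ref{s:convergence}. The only point requiring minor care is sign bookkeeping in the cross term, which is handled automatically by the decomposition $a-c=(a-b)-(c-b)$ chosen above.
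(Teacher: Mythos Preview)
Your proof is correct; the paper states this identity without proof, so there is nothing to compare against. Your decomposition $a-c=(a-b)-(c-b)$ and expansion is exactly the natural one-line verification.
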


A function $\varphi: \R^d \to \R \cup \{+\infty\}$ is called proper when its domain, the set  $\dom(\varphi) = \{ x \in \R^d: \varphi(x) < +\infty\}$, is nonempty. We say $\varphi$ is lsc if at any $x \in \dom(\varphi)$, $f(x) \leq \liminf_{y\to x} f(y)$, and it is convex if for all $x,y \in \dom(\varphi)$, and all $\beta \in (0,1)$, $\varphi(\beta x + (1-\beta) y) \leq \beta \varphi(x) + (1-\beta)\varphi(y)$. A function $\Phi: \R^d \to \R$ is said to be $L$-smooth if it is differentiable and its gradient is $L$-Lipschitz continuous, that is, for all $x,y \in \R^d$, $\norm{\nabla \Phi (x) - \nabla \Phi (y)} \leq L \|x-y\|$. In the next result, we recall some propeties of $L$-smooth functions that will be important in our analysis (see, for instance, \cite[Theorem 5.8]{Beck17}). 

\begin{lemma} \label{descent-lemma}
    Suppose $f: \R^d \to \R$ is $L$-smooth. Then, \begin{equation*}
        (\forall x,y \in \R^d) \; |f(x) - f(y) - \langle \nabla f(y), x -y \rangle| \leq \dfrac{L}{2}\|y-x\|^2.
    \end{equation*} If, in addition, $f$ is convex, then \begin{equation*}
        \langle \nabla f(x) - \nabla f(y), x-y \rangle \geq \dfrac{1}{2L}\|\nabla f(x) - \nabla f(y)\|^2.
    \end{equation*}
\end{lemma}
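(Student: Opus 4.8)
The plan is to prove the two inequalities separately, each by an elementary one-dimensional calculus argument along the segment joining $x$ and $y$; this is the classical route (cf. \cite[Theorem 5.8]{Beck17}).

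\emph{First inequality.} I would fix $x,y\in\R^d$ and apply the fundamental theorem of calculus to the map $t\mapsto f(y+t(x-y))$ on $[0,1]$; subtracting the constant $\langle\nabla f(y),x-y\rangle=\int_0^1\langle\nabla f(y),x-y\rangle\,dt$ yields
\[
f(x)-f(y)-\langle\nabla f(y),x-y\rangle=\int_0^1\big\langle\nabla f(y+t(x-y))-\nabla f(y),\,x-y\big\rangle\,dt.
\]
I would then bound the integrand in absolute value via the Cauchy--Schwarz inequality together with $\|\nabla f(y+t(x-y))-\nabla f(y)\|\le Lt\,\|x-y\|$ (a consequence of the $L$-Lipschitz continuity of $\nabla f$), and integrate $\int_0^1 Lt\,dt=L/2$ to obtain the claimed bound; the same computation controls the quantity from above and from below, which accounts for the absolute value.

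\emph{Second inequality.} Assuming additionally that $f$ is convex, I would fix $y\in\R^d$ and introduce the auxiliary function $\phi(w)\coloneqq f(w)-\langle\nabla f(y),w\rangle$, which is convex, $L$-smooth, and satisfies $\nabla\phi(w)=\nabla f(w)-\nabla f(y)$; in particular $\nabla\phi(y)=0$, so $y$ globally minimizes $\phi$. Applying the first inequality (already proved) to $\phi$ at the points $w-\tfrac1L\nabla\phi(w)$ and $w$, and using $\phi(y)\le\phi\big(w-\tfrac1L\nabla\phi(w)\big)$, gives
\[
\phi(y)\le\phi(w)-\tfrac{1}{2L}\|\nabla\phi(w)\|^2.
\]
Unravelling the definition of $\phi$ and taking $w=x$, this becomes $\tfrac{1}{2L}\|\nabla f(x)-\nabla f(y)\|^2\le f(x)-f(y)-\langle\nabla f(y),x-y\rangle$. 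Finally, convexity of $f$ gives $f(x)-f(y)\le\langle\nabla f(x),x-y\rangle$, and combining the last two inequalities produces $\tfrac{1}{2L}\|\nabla f(x)-\nabla f(y)\|^2\le\langle\nabla f(x)-\nabla f(y),x-y\rangle$, as required. (Alternatively, symmetrizing the displayed inequality in $x$ and $y$ and adding the two copies yields the stronger co-coercivity constant $1/L$.)

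The only point requiring a little care — and it is hardly an obstacle — is, in the second part, to evaluate the descent inequality at the \emph{displaced} point $w-\tfrac1L\nabla\phi(w)$ rather than at $w$ itself: evaluating at $w$ alone recovers only convexity-type estimates, not the quadratic lower bound in the gradient. Everything else is a routine textbook computation, the only mild technicality being the justification of the integral representation, which needs nothing more than continuity of $\nabla f$.
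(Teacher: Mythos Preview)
Your proof is correct and is precisely the standard textbook argument; the paper does not supply its own proof of this lemma but merely cites \cite[Theorem~5.8]{Beck17}, whose proof is essentially the one you have written. There is nothing to add.
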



For a proper function $\varphi: \R^d \to \R \cup \{+\infty\}$, and $x \in \dom(\varphi)$, we denote by  $\hat{\partial} \varphi(x)$ the Fr\'echet (or regular)
subdifferential of $\varphi$ at $x$, defined as \begin{equation*} \hat{\partial}
	\varphi(x) = \left\{ v \in \R^d : \liminf_{y \to x}
	\frac{\varphi(y)-\varphi(x) - \langle v, y - x \rangle}{\|
		y - x\|} \ge 0\right\}.  \end{equation*}   The limiting (or general) subdifferential of $\varphi$ at $x$, denoted $\partial \varphi (x)$, is defined as     \begin{equation*} \partial \varphi(x) = \limsup_{y \tophi x} \hat{\partial}\varphi(x),
\end{equation*} where $y \tophi x$ denotes convergence in the attentive sense, that is, $y \to
x $ and $\varphi(y) \to \varphi(x)$. When $\varphi$ is smooth, then $\hat{\partial}\varphi(x) = {\partial}\varphi(x) = \{\nabla \varphi (x)\}$. If $\varphi$ is proper lsc convex, then the Fr\'echet and limiting subdifferentials coincide with the subdifferential of convex analysis, namely, $$ \{ v \in \R^d: (\forall y \in \R^d) \; \varphi(y) \ge \varphi(x) + \langle v, y - x \rangle\}.$$  

A set of points of particular interest is the zeros of the subdifferential operator. We say $\bar{x} \in \R^d$ is a critical point of $\varphi$ if $0\in \partial \varphi(\bar{x})$. If $\varphi$ is convex, critical points are exactly the global minimizers of $\varphi$.

\begin{remark}[Critical points of the sum] \label{r:critical}
    Under Assumption \ref{a:blanket} and for $\varphi$ defined in \eqref{primal-problem}, in view of \cite[Exercise 8.8(c)]{Rockafellar_Wets_2009}, $\bar{x}$ is a critical point of $\varphi$ if
\begin{equation*}
    0 \in  \nabla f_1(\bar{x}) + \nabla f_2(\bar{x}) + \partial f_3 (\bar{x}).
\end{equation*}

\end{remark}


Given a function $\varphi: \R^d \to \R \cup \{+\infty\}$, a parameter $\gamma >0$, and a point $z \in \R^d$, the proximal operator of $\gamma \varphi$ at $x$ is defined as \begin{equation} \label{e:proximal}
    \prox_{\gamma \varphi} (z) \coloneqq \argmin\limits_{y\in\R^d} ~\varphi(y) + \frac{1}{2\gamma}\norm{y-z}^2.
\end{equation} The associated optimal value function is known as the Moreau envelope, defined as follows:
\begin{equation*}
    {\varphi}_{\gamma }^{\texttt{Moreau}} (z) \coloneqq \min_{y\in\R^d} ~\varphi(y) + \frac{1}{2\gamma}\norm{y-z}^2.
\end{equation*}

We say $\varphi$ is prox-bounded if, for some $\gamma >0$, $\varphi(\cdot) + \frac{1}{2\gamma}\|\cdot\|^2$ is bounded from below. The supremum $\gamma_{\varphi}>0$ of such parameters $\gamma$ is called the threshold of prox-boundedness. If $\varphi$ is a proper lsc prox-bounded function with threshold $\gamma_{\varphi} > 0$, then for any $\gamma \in (0, \gamma_{\varphi})$,   $\prox_{\gamma \varphi}$ is nonempty and compact-valued, and $\varphi_{\gamma \varphi}^{\texttt{Moreau}}$ is finite-valued \cite[Theorem 1.25]{Rockafellar_Wets_2009}. In particular, if $\varphi$ is a proper lsc convex function, then $\gamma_{\varphi}=+\infty$,  and $\prox_{\gamma \varphi}$ is a single-valued mapping \cite[Theorem 12.12, Theorem 12.17]{Rockafellar_Wets_2009}.

\begin{remark}[On the well-definedness of proximal operations]
\label{remark:welldefinedprox}
Under Assumption \ref{a:blanket}, 
    $\prox_{\gamma f_1}$ and $\prox_{\gamma f_2}$ are single-valued since $f_1$ and $f_2$ are proper lsc convex, while $\prox_{\gamma f_3}$ is well-defined for $\gamma < \frac{1}{L_1+L_2}$, as $f_3$ is prox-bounded with threshold at least $\frac{1}{L_1+L_2}$ from \cite[Remark 3.1]{themelis2020douglas}.

\end{remark}



\section{An envelope for Ryu's splitting method} \label{s:envelope}

Our convergence analysis for \eqref{Ryu:iteration_relaxed} under Assumption \ref{a:blanket} builds on the approach in \cite{ALT24,AT25,themelis2020douglas}, which utilizes an envelope function, akin to the Moreau envelope, well-suited to the corresponding iterative method. We begin our analysis by motivating such merit function.

\begin{proposition} \label{lemma:Ryu:envelope-derivation}
    Suppose that Assumption \ref{a:blanket} holds, and let $\alpha>0$ and $ (z_1^k, z_2^k) \in \R^d \times \R^d$. Then, $x_3^k$ defined in \eqref{Ryu:iteration_relaxed} solves the following minimization problem 
    \begin{equation*}
        \min_{y \in \R^d} \left\{f_3(y) + \displaystyle\sum_{i=1}^2 \left[ f_i(x_i^k) + \langle y - x_i^k,  \nabla f_i(x_i^k)   \rangle  + \dfrac{1}{2\gamma_i}\|y - x_i^k\|^2  \right]
    \right\},
    \end{equation*}
    where $\gamma_1 \coloneqq \frac{\gamma}{\alpha}$ and $\gamma_2 \coloneqq \frac{\gamma}{1-\alpha}$.\footnote{We adopt the convention that $\frac{c}{0}=\infty$ and $\frac{d}{\infty}=0$ for any $c> 0$ and $d\in \R$. } 
\end{proposition}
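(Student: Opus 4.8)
The plan is to show that, up to an additive constant independent of $y$, the objective of the displayed minimization problem equals $f_3(y) + \frac{1}{2\gamma}\|y-w\|^2$ for a suitable point $w\in\R^d$, and then to identify $w$ with the argument $x_1^k - z_1^k + x_2^k - z_2^k$ appearing in the third line of \eqref{Ryu:iteration_relaxed}. Since $x_3^k$ is, by construction, an element of $\prox_{\gamma f_3}(x_1^k - z_1^k + x_2^k - z_2^k)$, it will then automatically be a minimizer of the displayed problem.

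First I would drop the terms $f_i(x_i^k) - \inner{x_i^k}{\nabla f_i(x_i^k)}$, $i=1,2$, which are constant in $y$, and rewrite the objective (modulo this constant) as
\begin{equation*}
g(y) \coloneqq f_3(y) + \inner{y}{\nabla f_1(x_1^k) + \nabla f_2(x_2^k)} + \frac{1}{2\gamma_1}\|y - x_1^k\|^2 + \frac{1}{2\gamma_2}\|y - x_2^k\|^2 .
\end{equation*}
The arithmetic identity $\frac{1}{\gamma_1} + \frac{1}{\gamma_2} = \frac{\alpha}{\gamma} + \frac{1-\alpha}{\gamma} = \frac{1}{\gamma}$ (with the stated conventions $\frac c0 = \infty$, $\frac d\infty = 0$, so that the $f_2$-weight $1/\gamma_2$ vanishes when $\alpha = 1$) shows that the coefficient of $\|y\|^2$ in $g$ is $\frac{1}{2\gamma}$, while the linear-in-$y$ part is $-\inner{y}{\tfrac{\alpha x_1^k + (1-\alpha)x_2^k}{\gamma} - \nabla f_1(x_1^k) - \nabla f_2(x_2^k)}$. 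Completing the square therefore gives $g(y) = f_3(y) + \frac{1}{2\gamma}\|y - w\|^2 + \mathrm{const}$, where
\begin{equation*}
w = \alpha x_1^k + (1-\alpha)x_2^k - \gamma\big(\nabla f_1(x_1^k) + \nabla f_2(x_2^k)\big).
\end{equation*}

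Next I would invoke the first two steps of \eqref{Ryu:iteration_relaxed} together with the single-valuedness of $\prox_{\gamma f_1}$ and $\prox_{\frac{\gamma}{\alpha}f_2}$ (Remark~\ref{remark:welldefinedprox}, using that $f_1,f_2$ are convex and smooth). The optimality condition for $x_1^k = \prox_{\gamma f_1}(z_1^k)$ reads $\gamma\nabla f_1(x_1^k) = z_1^k - x_1^k$, and that for $x_2^k = \prox_{\frac{\gamma}{\alpha}f_2}(\frac{z_2^k}{\alpha} + x_1^k)$ reads $\frac{\gamma}{\alpha}\nabla f_2(x_2^k) = \frac{z_2^k}{\alpha} + x_1^k - x_2^k$, i.e. $\gamma\nabla f_2(x_2^k) = z_2^k + \alpha x_1^k - \alpha x_2^k$. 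Substituting both identities into the expression for $w$ and simplifying collapses it to $w = x_1^k - z_1^k + x_2^k - z_2^k$. Hence $\argmin_{y\in\R^d} g(y) = \prox_{\gamma f_3}(x_1^k - z_1^k + x_2^k - z_2^k) \ni x_3^k$, and since $g$ coincides with the displayed objective up to a constant, $x_3^k$ solves the stated minimization problem.

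I do not expect a real obstacle here: the argument is essentially bookkeeping. The only points demanding care are the fusion of the two quadratics through $\frac{1}{\gamma_1}+\frac{1}{\gamma_2}=\frac1\gamma$ (so the completed-square centre comes out exactly right, and the limiting case $\alpha=1$ recovering Ryu's method is handled uniformly by the conventions), and keeping the sign bookkeeping straight when substituting the two first-order conditions to recognize $w$ as $x_1^k - z_1^k + x_2^k - z_2^k$.
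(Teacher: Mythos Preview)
Your proposal is correct and follows essentially the same route as the paper: both arguments hinge on the first-order optimality conditions for $x_1^k$ and $x_2^k$ together with the identity $\tfrac{1}{\gamma_1}+\tfrac{1}{\gamma_2}=\tfrac{1}{\gamma}$ to show that the displayed objective and $f_3(y)+\tfrac{1}{2\gamma}\|y-(x_1^k-z_1^k+x_2^k-z_2^k)\|^2$ differ only by a constant. The only cosmetic difference is direction: the paper starts from the $\prox_{\gamma f_3}$ form and expands to reach the envelope, whereas you start from the envelope and complete the square to reach the $\prox_{\gamma f_3}$ form.
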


\begin{proof}

From the $x_3$-update in \eqref{Ryu:iteration_relaxed}, we have
\begin{equation}
    x_3^k \in \argmin_{y \in \R^d}\left\{f_3(y) + \dfrac{1}{2\gamma}\|y - (x_1^k - z_1^k + x_2^k - z_2^k) \|^2
    \right\}.
    \label{eq:x3^k}
\end{equation} 
Meanwhile, the first-order optimality conditions of the $x_1$-update and the $x_2$-update yield, respectively, \begin{align}
    z_1^k &= \gamma \nabla f_1(x_1^k) + x_1^k{\text{, and}} \label{eq:z1^k} \\
    z_2^k &= \gamma \nabla f_2(x_2^k) + \alpha x_2^k  -\alpha  x_1^k .\label{eq:z2^k}
\end{align}  Hence, \begin{align*}
    x_1^k - z_1^k + x_2^k - z_2^k& = x_1^k - \left( \gamma \nabla f_1(x_1^k) + x_1^k  \right) + x_2^k - \left(  \gamma \nabla f_2(x_2^k) + \alpha x_2^k  -\alpha  x_1^k\right)\\
    & = \alpha x_1^k - \gamma \nabla f_1(x_1^k) + (1-\alpha)x_2^k - \gamma \nabla f_2(x_2^k) \\
    & = \alpha \left( x_1^k - \frac{\gamma}{\alpha}\nabla f_1(x_1^k)\right) + (1-\alpha) \left( x_2^k - \frac{\gamma}{1-\alpha}\nabla f_2(x_2^k)\right),  
\end{align*} which, combined with \eqref{eq:x3^k}, yields \begin{equation}
    x_3^k \in \argmin_{y \in \R^d}\left\{f_3(y) + \dfrac{1}{2\gamma}\left\|y - \left(\sum_{i=1}^2\alpha_i(x_i^k - \gamma_i \nabla f_i(x_i^k) \right) \right\|^2
    \right\} 
    \label{eq:x3-simplified}
\end{equation} for $\alpha_1 = \alpha$ and $\alpha_2 = 1 - \alpha$. Following the calculations in \cite[Theorem 5.5]{AT25}, by expanding the squared norm, dropping the constant term $\left\|\sum_{i=1}^2\alpha_i(x_i^k - \gamma_i \nabla f_i(x_i^k) \right\|^2$, and  adding the constant term $\sum_{i=1}^2 f_i(x_i^k)$, we get the desired result. \qed
\end{proof}

In view of Proposition~\ref{lemma:Ryu:envelope-derivation}, we define the \emph{Relaxed Ryu envelope} (\RtE), for all $(z_1,z_2) \in \R^d \times \R^d$, by
\begin{equation} \label{eq:RE}
\varphi_{\gamma}^{\texttt{Ryu}}(z_1,z_2) \coloneqq  \min_{y \in \R^d} \left\{f_3(y) + \displaystyle\sum_{i=1}^2 \left[ f_i(x_i) + \langle y - x_i,  \nabla f_i(x_i)   \rangle  + \dfrac{1}{2\gamma_i}\|y - x_i\|^2  \right]
    \right\},
    \end{equation}
which will serve as our merit function. Meanwhile, the corresponding set-valued iteration operator associated with \eqref{Ryu:iteration_relaxed} is given by \begin{equation*}
    T_{\gamma}^{\texttt{Ryu}}: (z_1, z_2) \mapsto (z_1 + \lambda( x_3 - x_1), z_2 + \lambda (x_3 - x_2)),
\end{equation*} where \begin{equation} \label{eq:T-x}
    \left\{\begin{array}{rcl}
         x_1 & = &\prox_{\gamma f_1}(z_1)  \\
        x_2 & = & \prox_{\frac{\gamma}{\alpha} f_2}(\frac{z_2}{\alpha} + x_1) \\
        x_3 & \in & \prox_{\gamma f_3}\big(x_1 - z_1 + x_2 - z_2\big).
    \end{array}\right.
\end{equation}
In view of Remark \ref{remark:welldefinedprox}, note that $T_{\gamma}^{\texttt{Ryu}}$ is nonempty- and compact-valued for any $(z_1,z_2) \in \R^d \times \R^d$ provided that $\gamma<\frac{1}{L_1+L_2}$.

{
\begin{remark}\label{rem:limitingcase}
    When $\alpha=1$, we get $\gamma_2 = \infty$ and so the term $\dfrac{1}{2\gamma_i}\|y - x_i^k\|^2 $ in \eqref{eq:RE} vanishes when $i=2$. The absence of this proximal term makes the envelope-based analysis inapplicable. As will be evident in the subsequent development (see Propositions~\ref{Ryu:sandwich} and \ref{p:fix-crit}, as well as the main results in Theorems~\ref{thm:Ryu_sufficient_descent} and \ref{t:subseq-conv}), this proximal term plays a crucial role in ensuring the existence of a stepsize that guarantees sufficient descent of the proposed merit function.
\end{remark}
}

\begin{remark}[Relationship between the Moreau envelope and the \RtE] \label{r:Moreau-R3E}
    Observe that from \eqref{eq:x3-simplified}, we have \begin{multline*}
        \RE(z_1,z_2) = \varphi_{\gamma f_3}^{\texttt{Moreau}}\left(\sum_{i=1}^2\alpha_i(x_i - \gamma_i \nabla f_i(x_i) )\right) \\- \left\|\sum_{i=1}^2\alpha_i(x_i - \gamma_i \nabla f_i(x_i) )\right\|^2 + \sum_{i=1}^2 f_i(x_i). 
    \end{multline*}
\end{remark}

We now establish some properties of the envelope function.
The following result states that the \RtE~inherits continuity properties of the Moreau envelope (cf. \cite[Proposition 4.2]{themelis2018forward} and \cite[Proposition 3.2]{themelis2020douglas}).

\begin{proposition}[Continuity of \RtE] \label{p:loc-lip}
The \RtE~is a real-valued and locally Lipschitz continuous function.   
\end{proposition}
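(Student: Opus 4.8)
The plan is to exploit the explicit formula from Remark~\ref{r:Moreau-R3E}, which expresses $\RE(z_1,z_2)$ as a composition of the Moreau envelope $\varphi_{\gamma f_3}^{\texttt{Moreau}}$ with a map built out of the single-valued proximal operators $x_i = x_i(z_1,z_2)$ together with smooth correction terms. Concretely, write $\RE(z_1,z_2) = \varphi_{\gamma f_3}^{\texttt{Moreau}}(G(z_1,z_2)) - \|G(z_1,z_2)\|^2 + f_1(x_1) + f_2(x_2)$, where $G(z_1,z_2) \coloneqq \sum_{i=1}^2 \alpha_i(x_i - \gamma_i \nabla f_i(x_i))$. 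Since a finite composition and sum of (locally) Lipschitz functions is (locally) Lipschitz, and sums/products of locally Lipschitz functions are locally Lipschitz on bounded sets, it suffices to check that each ingredient is real-valued and locally Lipschitz.

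First I would record finiteness: $\varphi_{\gamma f_3}^{\texttt{Moreau}}$ is finite-valued because, by Remark~\ref{remark:welldefinedprox}, $f_3$ is prox-bounded with threshold at least $\frac{1}{L_1+L_2} > \gamma$, so by \cite[Theorem 1.25]{Rockafellar_Wets_2009} the Moreau envelope is finite everywhere (and in fact continuous); $f_1,f_2$ are real-valued by Assumption~\ref{a:blanket}(a). Hence $\RE$ is real-valued. Next, the Moreau envelope of a proper lsc prox-bounded function is locally Lipschitz on the interior of its domain, which here is all of $\R^d$ (this is standard; it also follows from \cite[Example 10.32]{Rockafellar_Wets_2009} or can be cited as in the referenced analogues \cite[Proposition 4.2]{themelis2018forward}, \cite[Proposition 3.2]{themelis2020douglas}). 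The maps $z_1 \mapsto x_1 = \prox_{\gamma f_1}(z_1)$ and $(z_1,z_2)\mapsto x_2 = \prox_{\frac{\gamma}{\alpha} f_2}(\frac{z_2}{\alpha}+x_1)$ are nonexpansive (hence $1$-Lipschitz) compositions, because the proximal mapping of a convex function is firmly nonexpansive and $z_1 \mapsto \frac{z_2}{\alpha}+x_1(z_1)$ is affine in $z_2$ and Lipschitz in $z_1$. The correction terms $x_i - \gamma_i\nabla f_i(x_i)$ are Lipschitz in $x_i$ since $\nabla f_i$ is $L_i$-Lipschitz by Assumption~\ref{a:blanket}(a), so $G$ is globally Lipschitz as a composition of Lipschitz maps; likewise $z\mapsto f_i(x_i(z))$ is locally Lipschitz because $f_i$ is continuously differentiable (hence locally Lipschitz) and $x_i(\cdot)$ is Lipschitz.

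Putting these together: $\varphi_{\gamma f_3}^{\texttt{Moreau}}\circ G$ is locally Lipschitz as a composition of a locally Lipschitz function with a globally Lipschitz one; $\|G(\cdot)\|^2$ is locally Lipschitz as the square of a Lipschitz function (Lipschitz on bounded sets); and $f_1(x_1(\cdot)) + f_2(x_2(\cdot))$ is locally Lipschitz. A finite sum of locally Lipschitz functions is locally Lipschitz, which gives the claim. The main obstacle is not any single computation but rather making sure the Moreau-envelope regularity input is correctly invoked in the \emph{nonconvex} case for $f_3$ — one must use prox-boundedness with the right threshold (which is exactly what guarantees $\gamma < \gamma_{f_3}$ and thus finiteness and local Lipschitz continuity of $\varphi_{\gamma f_3}^{\texttt{Moreau}}$ on all of $\R^d$); everything else reduces to standard stability properties of proximal maps of convex functions and Lipschitz calculus, so the proof can be kept short by citing the analogous statements in \cite{themelis2018forward,themelis2020douglas,AT25}.
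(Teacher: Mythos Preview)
Your proposal is correct and follows essentially the same approach as the paper's own proof: invoke the representation in Remark~\ref{r:Moreau-R3E}, use nonexpansiveness of $\prox_{\gamma f_1}$ and $\prox_{\frac{\gamma}{\alpha} f_2}$ to obtain global Lipschitz continuity of $(z_1,z_2)\mapsto (x_1,x_2)$, combine with $L_i$-smoothness of $f_i$ and local Lipschitz continuity of the Moreau envelope (via \cite[Example 10.32]{Rockafellar_Wets_2009}), and conclude by Lipschitz calculus. The paper's proof is simply a terser version of exactly this argument; your additional explicit treatment of real-valuedness via prox-boundedness is a welcome clarification but does not change the route.
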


\begin{proof}
     Since $\prox_{\gamma f_1}$  and $\prox_{\frac{\gamma}{\alpha} f_2}$ are nonexpansive \cite[Theorem 6.42(b)]{Beck17}, then the maps $z_1 \mapsto x_1$ and $(z_1,z_2) \mapsto x_2$ defined in \eqref{eq:T-x} are (globally) Lipschitz continuous. Furthermore, from Assumption~\ref{a:blanket}, $\nabla f_i$ is (globally) Lipschitz continuous, for $i=1,2$. Then, as the Moreau envelope is locally Lipschitz continuous \cite[Example 10.32]{Rockafellar_Wets_2009}, the conclusion follows from Remark~\ref{r:Moreau-R3E}.\qed
\end{proof}

Next, we show some sandwich-type bounds relating the \RtE~and the original objective function in problem \eqref{primal-problem} (cf. \cite[Proposition 4.3]{themelis2018forward} and \cite[Proposition 3.3]{themelis2020douglas}).

\begin{proposition} \label{Ryu:sandwich}
Suppose Assumption~\ref{a:blanket} holds,  $\gamma \in (0, \frac{1}{L_1+L_2})$,  and $\lambda,\alpha>0$. Then, given $(z_1, z_2) \in \R^d \times \R^d$, consider $(x_1,x_2,x_3)\in\R^d \times \R^d \times \R^d$ defined by \eqref{eq:T-x}. Then, 
\begin{itemize}
    \item[(i)] For $\gamma_1 = \frac{\gamma}{\alpha}$ and $\gamma_2 = \frac{\gamma}{1-\alpha}$,\begin{multline*}
\varphi_{\gamma}^{\texttt{Ryu}}(z_1, z_2) \leq \min\left\{\varphi(x_1) +  \frac{1}{2}\left(L_2+\frac{1}{\gamma_2}\right)\|x_1 - x_2\|^2,\right.\\\left.\varphi(x_2) +  \frac{1}{2}\left(L_1+\frac{1}{\gamma_1}\right)\|x_1 - x_2\|^2\right\}.
    \end{multline*}
    \item[(ii)] $\varphi_{\gamma}^{\texttt{Ryu}}(z_1, z_2)  \geq  \varphi(x_3) + \frac{\alpha - \gamma L_1}{2\gamma} \norm{x_3-x_1}^2+ \frac{(1-\alpha) - \gamma L_2}{2\gamma}\norm{x_3-x_2}^2$. 
    \item[(iii)] If $\alpha\in (0,1)$ and $\gamma \leq \min \left\{ \frac{\alpha}{L_1}, \frac{1-\alpha}{L_2}\right\}$, then $\varphi_{\gamma}^{\texttt{Ryu}}(z_1, z_2)  \geq  \varphi(x_3) $.

\end{itemize}

\end{proposition}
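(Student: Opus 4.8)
The plan is to work directly from the variational definition of $\RE$ in \eqref{eq:RE} and the optimality characterizations \eqref{eq:z1^k}--\eqref{eq:z2^k} of the $x_1$- and $x_2$-updates, together with the two inequalities in the descent lemma (Lemma~\ref{descent-lemma}).

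For part (i), since $\RE(z_1,z_2)$ is the minimum over $y$ of the displayed quadratic model of $f_3$, I would simply plug in the feasible points $y = x_1$ and $y = x_2$ to obtain two upper bounds, and then take their minimum. Plugging in $y = x_1$: the $i=1$ bracket collapses to $f_1(x_1)$, the $i=2$ bracket becomes $f_2(x_2) + \langle x_1 - x_2, \nabla f_2(x_2)\rangle + \frac{1}{2\gamma_2}\|x_1 - x_2\|^2$, and adding $f_3(x_1)$ gives an expression majorized, via the $L_2$-smoothness inequality $f_2(x_1) \le f_2(x_2) + \langle x_1-x_2,\nabla f_2(x_2)\rangle + \frac{L_2}{2}\|x_1-x_2\|^2$ applied in reverse (i.e.\ $\langle x_1-x_2,\nabla f_2(x_2)\rangle \le f_2(x_1) - f_2(x_2) + \frac{L_2}{2}\|x_1-x_2\|^2$), by $f_3(x_1) + f_1(x_1) + f_2(x_1) + \frac12(L_2 + \frac1{\gamma_2})\|x_1-x_2\|^2 = \varphi(x_1) + \frac12(L_2+\frac1{\gamma_2})\|x_1-x_2\|^2$. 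The symmetric choice $y = x_2$ yields the other term. This part is essentially bookkeeping.

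For part (ii), which I expect to be the main obstacle, I would use the representation of the minimizer: $x_3$ attains the minimum in \eqref{eq:RE}, so
\[
\RE(z_1,z_2) = f_3(x_3) + \sum_{i=1}^2\Big[f_i(x_i) + \langle x_3 - x_i, \nabla f_i(x_i)\rangle + \tfrac{1}{2\gamma_i}\|x_3 - x_i\|^2\Big].
\]
Now for each $i$, the $L_i$-smoothness/convexity inequality from Lemma~\ref{descent-lemma} gives $f_i(x_i) + \langle x_3 - x_i, \nabla f_i(x_i)\rangle \ge f_i(x_3) - \frac{L_i}{2}\|x_3 - x_i\|^2$. Substituting, the $f_i(x_i)$-brackets are bounded below by $f_i(x_3) + \frac12(\frac{1}{\gamma_i} - L_i)\|x_3-x_i\|^2$, and summing over $i=1,2$ together with the $f_3(x_3)$ term reconstitutes $\varphi(x_3) = f_1(x_3)+f_2(x_3)+f_3(x_3)$ plus the two quadratic remainders. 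Substituting $\gamma_1 = \gamma/\alpha$ and $\gamma_2 = \gamma/(1-\alpha)$ gives coefficients $\frac12(\frac{\alpha}{\gamma} - L_1) = \frac{\alpha - \gamma L_1}{2\gamma}$ and $\frac{(1-\alpha) - \gamma L_2}{2\gamma}$, exactly the claimed lower bound. The subtlety here is purely that one must use the minimizer characterization (evaluating at $y=x_3$ is an equality, not just an inequality) — everything else is the descent lemma applied termwise.

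For part (iii), this is an immediate corollary of (ii): if $\alpha \in (0,1)$ and $\gamma \le \min\{\alpha/L_1, (1-\alpha)/L_2\}$, then $\alpha - \gamma L_1 \ge 0$ and $(1-\alpha) - \gamma L_2 \ge 0$, so both quadratic remainder terms in (ii) are nonnegative, giving $\RE(z_1,z_2) \ge \varphi(x_3)$. The only thing worth remarking is that well-definedness of all the proximal operations (hence of $x_1,x_2,x_3$) is guaranteed by $\gamma < \frac{1}{L_1+L_2}$ via Remark~\ref{remark:welldefinedprox}, and that under the hypotheses of (iii) one automatically has $\gamma \le \frac{1}{L_1+L_2}$ since $\frac{\alpha}{L_1}$ and $\frac{1-\alpha}{L_2}$ cannot both exceed $\frac{1}{L_1+L_2}$ — so the stated assumption $\gamma\in(0,\frac1{L_1+L_2})$ in the proposition header is consistent. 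I would close by noting the degenerate cases $\alpha \in \{0,1\}$ handled by the $\frac{c}{0}=\infty$ convention, which makes the corresponding proximal term in (i) vanish and the corresponding coefficient in (ii) reduce appropriately.
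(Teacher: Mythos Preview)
Your proposal is correct and follows essentially the same approach as the paper's proof: for (i) you substitute $y=x_1$ and $y=x_2$ into the variational definition \eqref{eq:RE} and apply the descent lemma to the remaining cross term; for (ii) you use that $y=x_3$ attains the minimum and apply the descent lemma termwise; and (iii) follows immediately from (ii). Your additional remarks on well-definedness and the degenerate $\alpha\in\{0,1\}$ cases are accurate but not needed for the argument.
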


\begin{proof}
    Take $y = x_1$ in \eqref{eq:RE} and apply Lemma~\ref{descent-lemma} to $f = f_2$  to obtain\begin{equation*}
    \begin{array}{rcl}
         \RE(z_1, z_2) &\leq& f_3(x_1)+f_1(x_1) + f_2(x_2)+ \langle \nabla f_2(x_2), x_1 - x_2\rangle  + \frac{1}{2\gamma_2}\|x_1 - x_2\|^2  \\
         & \leq & \varphi(x_1) + \dfrac{1}{2}\left(L_2+\frac{1}{\gamma_2}\right)\|x_1 - x_2\|^2.
    \end{array}
\end{equation*} Similarly, taking $y = x_2$ in  \eqref{eq:RE} and applying Lemma~\ref{descent-lemma} to $f = f_1$ yields \begin{equation*}
    \begin{array}{rcl}
         \RE(z_1, z_2) 
         & \leq & \varphi(x_2) + \dfrac{1}{2}\left(L_1+\frac{1}{\gamma_1}\right)\|x_2 - x_1\|^2.
    \end{array}
\end{equation*} From these, we immediately get (i).  Moreover,  since $y = x_3$ minimizes the problem in \eqref{eq:RE},
\begin{equation*}
    \begin{array}{rcl}
         \RE(z_1,z_2) & = & f_3(x_3) + \displaystyle\sum_{i=1}^2 \left[ f_i(x_i) + \langle \nabla f_i(x_i), x_3 - x_i\rangle +  \dfrac{1}{2\gamma_i}\|x_3-x_i\|^2  \right] \\
         & \geq & f_3(x_3) + \displaystyle\sum_{i=1}^2 \left[ f_i(x_3) -  \dfrac{L_i}{2}\|x_3-x_i\|^2 +  \dfrac{1}{2\gamma_i}\|x_3-x_i\|^2 \right]\\
         & = &   \varphi(x_3) + \frac{\alpha - \gamma L_1}{2\gamma} \norm{x_3-x_1}^2+ \frac{(1-\alpha) - \gamma L_2}{2\gamma}\norm{x_3-x_2}^2,
    \end{array}
\end{equation*} where the inequality holds by Lemma \ref{descent-lemma} and the last equality holds by plugging in $\gamma_1=\frac{\gamma}{\alpha}$ and $\gamma_2 = \frac{\gamma}{1-\alpha}$. This completes the proof of (ii). Finally, part (iii) immediately follows from (ii). \qed 
\end{proof}

Note that the iteration in \eqref{Ryu:iteration_relaxed} is designed to find a fixed point of the relaxed Ryu splitting operator \( T_{\gamma}^{\texttt{Ryu}} \), that is, a point in the set
\[
\fix T_{\gamma}^{\texttt{Ryu}} \coloneqq  \left\{ (z_1, z_2) \in \mathbb{R}^d \times \mathbb{R}^d : (z_1, z_2) \in T_{\gamma}^{\texttt{Ryu}}(z_1, z_2) \right\}.
\]
In the next proposition, we establish a connection between such fixed points and the notion of criticality, as commonly used in optimization.

\begin{proposition} \label{p:fix-crit}
Suppose Assumption~\ref{a:blanket} holds, and let $\gamma \in (0, \frac{1}{L_1+L_2})$ and $\alpha,\lambda>0$.   Then, $(\bar{z}_1, \bar{z}_2) \in \fix T_{\gamma}^{\texttt{Ryu}}$ {if and only if} $\bar{x} := \bar{x}_1= \bar{x}_2= \bar{x}_3$, where
\begin{equation} \label{eq:T-x-stationary}
    \left\{\begin{array}{rcl}
         \bar{x}_1 & = &\prox_{\gamma f_1}(\bar{z}_1)  \\
        \bar{x}_2 & = & \prox_{\frac{\gamma}{\alpha} f_2}(\frac{\bar{z}_2}{\alpha} + \bar{x}_1) \\
        \bar{x}_3 & \in & \prox_{\gamma f_3}\big(\bar{x}_1 - \bar{z}_1 + \bar{x}_2 - \bar{z}_2\big).
    \end{array}\right.
\end{equation} Furthermore, such $\bar{x}$ is a critical point of \eqref{primal-problem}, and 
$\varphi (\bar x) = \varphi_{\gamma}^{\texttt{Ryu}}(\bar{z}_1, \bar{z}_2)$. In particular, { if $\alpha\in (0,1)$ and $\gamma \leq \min \left\{ \frac{\alpha}{L_1}, \frac{1-\alpha}{L_2}\right\}$} 
, then \begin{equation*}
    \min \varphi = \min \RE.
\end{equation*}
\end{proposition}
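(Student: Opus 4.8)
The plan is to prove Proposition~\ref{p:fix-crit} in three stages: first the characterization of fixed points, then criticality, and finally the value identity and its consequence $\min\varphi=\min\RE$.

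\textbf{Step 1: Fixed points are exactly the "consensus" points.} Suppose $(\bar z_1,\bar z_2)\in\fix T_\gamma^{\texttt{Ryu}}$, and let $\bar x_1,\bar x_2,\bar x_3$ be as in \eqref{eq:T-x-stationary}. By definition of $T_\gamma^{\texttt{Ryu}}$, the update equations give $\bar z_1 = \bar z_1 + \lambda(\bar x_3-\bar x_1)$ and $\bar z_2 = \bar z_2 + \lambda(\bar x_3-\bar x_2)$, so since $\lambda>0$ we get $\bar x_3=\bar x_1$ and $\bar x_3=\bar x_2$, i.e.\ $\bar x:=\bar x_1=\bar x_2=\bar x_3$. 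Conversely, if the three points coincide then $\bar x_3-\bar x_1=\bar x_3-\bar x_2=0$, so the $z$-updates return $(\bar z_1,\bar z_2)$ and $(\bar z_1,\bar z_2)\in T_\gamma^{\texttt{Ryu}}(\bar z_1,\bar z_2)$. This is routine once one unpacks the definition.

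\textbf{Step 2: $\bar x$ is critical for \eqref{primal-problem}.} Here I would use the first-order optimality conditions already derived in Proposition~\ref{lemma:Ryu:envelope-derivation} with $(z_1,z_2)$ replaced by $(\bar z_1,\bar z_2)$ and $x_i^k$ by $\bar x_i=\bar x$. From \eqref{eq:z1^k}, $\bar z_1 = \gamma\nabla f_1(\bar x)+\bar x$; from \eqref{eq:z2^k}, $\bar z_2 = \gamma\nabla f_2(\bar x)+\alpha\bar x-\alpha\bar x=\gamma\nabla f_2(\bar x)$. Then $\bar x_1-\bar z_1+\bar x_2-\bar z_2 = \bar x-\gamma\nabla f_1(\bar x)+\bar x-\gamma\nabla f_2(\bar x) - \bar x = \bar x - \gamma\nabla f_1(\bar x)-\gamma\nabla f_2(\bar x)$ (using $\bar x_1=\bar x_2=\bar x$; more directly, this is \eqref{eq:x3-simplified} with all $x_i^k=\bar x$ and $\alpha_1+\alpha_2=1$, giving the point $\bar x-\gamma\nabla f_1(\bar x)-\gamma\nabla f_2(\bar x)$). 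The $\bar x_3$-inclusion $\bar x\in\prox_{\gamma f_3}(\bar x-\gamma\nabla f_1(\bar x)-\gamma\nabla f_2(\bar x))$ together with the optimality condition of the prox subproblem gives $0\in\partial f_3(\bar x) + \frac1\gamma\big(\bar x - (\bar x-\gamma\nabla f_1(\bar x)-\gamma\nabla f_2(\bar x))\big) = \nabla f_1(\bar x)+\nabla f_2(\bar x)+\partial f_3(\bar x)$, which by Remark~\ref{r:critical} says $\bar x$ is a critical point of $\varphi$.

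\textbf{Step 3: Value identity and $\min\varphi=\min\RE$.} Since $\bar x_1=\bar x_2=\bar x_3=\bar x$, evaluate \eqref{eq:RE} at the minimizer $y=\bar x_3=\bar x$: the linear terms $\langle\bar x-\bar x_i,\nabla f_i(\bar x_i)\rangle$ and the quadratic terms $\frac1{2\gamma_i}\|\bar x-\bar x_i\|^2$ all vanish, leaving $\RE(\bar z_1,\bar z_2)=f_3(\bar x)+f_1(\bar x)+f_2(\bar x)=\varphi(\bar x)$. For the last claim, assume $\alpha\in(0,1)$ and $\gamma\le\min\{\alpha/L_1,(1-\alpha)/L_2\}$. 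On one hand, by Proposition~\ref{Ryu:sandwich}(iii), $\RE(z_1,z_2)\ge\varphi(x_3)\ge\min\varphi$ for every $(z_1,z_2)$, so $\inf\RE\ge\min\varphi$. On the other hand, Assumption~\ref{a:blanket}(c) provides a minimizer $x^\star$ of $\varphi$; since $x^\star$ is then a critical point of $\varphi$, one expects it to arise as the $\bar x$ associated with some fixed point $(\bar z_1,\bar z_2)$ — concretely, set $\bar z_1 := \gamma\nabla f_1(x^\star)+x^\star$ and $\bar z_2 := \gamma\nabla f_2(x^\star)$ and verify via \eqref{eq:T-x-stationary} that $\bar x_1=\bar x_2=x^\star$ and (using $0\in\nabla f_1(x^\star)+\nabla f_2(x^\star)+\partial f_3(x^\star)$) that $x^\star\in\prox_{\gamma f_3}(x^\star-\gamma\nabla f_1(x^\star)-\gamma\nabla f_2(x^\star))$, so $x^\star=\bar x_3$ too. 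Then by Step~1, $(\bar z_1,\bar z_2)$ is a fixed point, and by the value identity $\RE(\bar z_1,\bar z_2)=\varphi(x^\star)=\min\varphi$, giving $\inf\RE\le\min\varphi$. Combining, $\min\RE=\min\varphi$ (the min on the left being attained).

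\textbf{Main obstacle.} The bookkeeping in Steps~1--3 is mostly mechanical. The one point requiring a little care is the backward direction in the last claim: showing that a global minimizer $x^\star$ of $\varphi$ actually corresponds to a genuine fixed point of $T_\gamma^{\texttt{Ryu}}$ — i.e.\ that the prox inclusion $x^\star\in\prox_{\gamma f_3}(\cdots)$ really does follow from $0\in\nabla f_1(x^\star)+\nabla f_2(x^\star)+\partial f_3(x^\star)$. This uses that $x^\star$ is a stationary point of the (possibly nonconvex) subproblem defining $\prox_{\gamma f_3}$ at that argument, which gives membership in the $\argmin$ set only after invoking the sharpness of the sandwich bound in Proposition~\ref{Ryu:sandwich}(iii): since $\RE(\bar z_1,\bar z_2)\le\varphi(x^\star)=\min\varphi$ (from the $y=x^\star$ bound, which holds because $x^\star$ is a prox-stationary point) and $\RE(\bar z_1,\bar z_2)\ge\varphi(\bar x_3)\ge\min\varphi$, equality forces $\varphi(\bar x_3)=\min\varphi$ and $\|x^\star - \bar x_3\|=0$ via the coercive error terms, closing the loop. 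I would present this carefully rather than asserting it.
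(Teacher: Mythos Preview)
Your Steps~1 and~2 match the paper's argument exactly. For the value identity in Step~3, you evaluate the envelope directly at the minimizer $y=\bar x_3=\bar x$, whereas the paper squeezes $\varphi(\bar x)$ between the two sandwich bounds of Proposition~\ref{Ryu:sandwich}(i)--(ii) (with $x_1=x_2=x_3=\bar x$ so the penalty terms vanish); both routes are equally short.

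For $\min\varphi=\min\RE$, your argument is in fact more complete than the paper's, which simply writes the chain $\RE(\bar z_1,\bar z_2)=\varphi(\bar x)\le\varphi(x_3)\le\RE(z_1,z_2)$ for $\bar x\in\argmin\varphi$ without explicitly exhibiting the pair $(\bar z_1,\bar z_2)$. Your construction $\bar z_1=x^\star+\gamma\nabla f_1(x^\star)$, $\bar z_2=\gamma\nabla f_2(x^\star)$ fills that gap. However, the ``main obstacle'' you flag is milder than you suggest, and the detour through fixed points is unnecessary: once you have verified $\bar x_1=\bar x_2=x^\star$, Proposition~\ref{Ryu:sandwich}(i) already gives $\RE(\bar z_1,\bar z_2)\le\varphi(x^\star)=\min\varphi$ directly---this is just plugging $y=x^\star$ into the minimum defining $\RE$, and requires nothing about $x^\star$ being a prox-stationary point of the $f_3$-subproblem. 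Combined with $\RE\ge\min\varphi$ everywhere from Proposition~\ref{Ryu:sandwich}(iii), you are done. The ``closing the loop'' step (forcing $\bar x_3=x^\star$ via the quadratic error terms in Proposition~\ref{Ryu:sandwich}(ii)) is not needed for the identity $\min\varphi=\min\RE$, and would in fact fail at the endpoint $\gamma=\alpha/L_1$ or $\gamma=(1-\alpha)/L_2$, where the relevant coefficient vanishes.
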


\begin{proof}
    It is straightforward from the definition of $T_{\gamma}^{\texttt{Ryu}}$ that  $(\bar{z}_1, \bar{z}_2) \in \fix T_{\gamma}^{\texttt{Ryu}}$ is equivalent to having $\bar{x} := \bar{x}_1= \bar{x}_2= \bar{x}_3$ satisfying the conditions in \eqref{eq:T-x-stationary}. Hence, it suffices to prove that such $\bar{x}$ is, in this case, always a critical point of $\varphi$. Evaluating the first-order optimality conditions of the $x_3$-step, namely, \begin{equation} \label{Ryu:OC3}
        0 \in \gamma \partial f_3(x_3) + x_3 - x_1 + z_1 - x_2 + z_2
    \end{equation} at $(x_1,x_2,x_3)= (\bar{x},\bar{x},\bar{x})$ and $z_i = \bar{z}_i$, for $i=1,2$, yields \begin{equation*} \label{Ryu:OC3-xbar}
    0 \in \gamma\partial f_3(\bar{x})  + \bar{z}_1 - \bar{x} + \bar{z}_2.
\end{equation*} Adding this inclusion with \eqref{eq:z1^k} and \eqref{eq:z2^k} using the substitutions $x_1^k \leftarrow \bar{x}$ and  $x_2^k \leftarrow \bar{x}$, we obtain \begin{equation*}
    0 \in \gamma \big(\nabla f_1(\bar{x}) + \nabla f_2(\bar{x}) + \partial f_3(\bar{x})\big) ,
\end{equation*} that is, $0 \in \partial \varphi(\bar{x})$ by Remark~\ref{r:critical}. 


Furthermore, given  $(\bar{z}_1, \bar{z}_2) \in \fix T_{\gamma}^{\texttt{Ryu}}$, Proposition~\ref{Ryu:sandwich}(i)\&(ii)  
imply that $\varphi(\bar{x}) = \RE(\bar{z}_1,\bar{z}_2)$. 
Finally, in view of Proposition~\ref{Ryu:sandwich}(iii), 
 for any $(z_1,z_2) \in \R^d \times \R^d$, and $\bar{x} \in \argmin \varphi$, \begin{equation*}
    \RE(\bar{z}_1,\bar{z}_2) = \varphi(\bar{x}) \leq \varphi(x_3) \leq \RE(z_1,z_2).
\end{equation*} Hence, $(\bar{z}_1,\bar{z}_2) \in \argmin \RE$ and $\min \varphi = \min \RE$.\qed
\end{proof}

At this point, we have built the necessary tools regarding the \RtE~for the analysis of convergence of the proposed relaxed Ryu splitting method. In the next section, we show subsequential convergence of the method under standard assumptions in the literature.

\section{Convergence of modified Ryu's three-operator splitting method via envelopes} \label{s:convergence}
To establish the (subsequential) convergence of the iterative method in \eqref{Ryu:iteration_relaxed}, we follow the approach introduced in~\cite{themelis2020douglas} for the Douglas--Rachford splitting method. Specifically, the core argument relies on a sufficient decrease property satisfied by the \RtE.

\subsection{Sufficient decrease property for \RtE}
We first prove three technical lemmas that we will use in the main result of this section. We use the following notation: for $i=1,2$, \begin{equation}\label{Ryu:notation}
    \begin{array}{rcl}
        \Delta x_i^k &=& x_i^{k+1} - x_i^k, \\
         \Delta g_i^k &=& \nabla f_i(x_i^{k+1}) - \nabla f_i(x_i^k) , \\
         \Delta z_i^k & = & z_i^{k+1} - z_i^k.
    \end{array}
\end{equation}

\begin{lemma} \label{lemma:diff_of_sqrd_norms}
Under Assumption~\ref{a:blanket}, the sequences $(x_i^k)_{k}$ for $i=1,2,3$, and $(z_i^k)_k$ for $i=1,2$ generated by \eqref{Ryu:iteration_relaxed}
 satisfy: 
\begin{equation} \label{Ryu_relaxed:x_3-x_1}
    \|x_3^k - x_1^k\|^2 - \|x_3^k - x_1^{k+1}\|^2 = \left( \dfrac{2}{\lambda}- 1 \right)\|\Delta x_1^k\|^2 + \dfrac{2\gamma}{\lambda} \langle \Delta x_1^k,   \Delta g_1^k \rangle
\end{equation} and \begin{equation} \label{Ryu_relaxed:x_3-x_2}
    \|x_3^k - x_2^k\|^2 - \|x_3^k - x_2^{k+1}\|^2 = \left(\dfrac{2\alpha}{\lambda}-1 \right) \|\Delta x_2^k\|^2 + \dfrac{2\gamma}{\lambda} \langle \Delta x_2^k,   \Delta g_2^k \rangle  - \dfrac{2\alpha}{\lambda} \langle \Delta x_2^k,   \Delta x_1^k\rangle.
\end{equation}
\end{lemma}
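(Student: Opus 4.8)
The plan is to reduce both identities to the algebraic identity of Lemma~\ref{lemma:identity}, once the vectors $\lambda(x_3^k-x_1^k)$ and $\lambda(x_3^k-x_2^k)$ have been re-expressed through the increments introduced in \eqref{Ryu:notation}. The starting point is that the first-order optimality conditions \eqref{eq:z1^k} and \eqref{eq:z2^k} hold at every iterate of \eqref{Ryu:iteration_relaxed}. Writing them at consecutive indices $k$ and $k+1$ and subtracting gives
\[
\Delta z_1^k = \Delta x_1^k + \gamma\,\Delta g_1^k, \qquad \Delta z_2^k = \alpha\,\Delta x_2^k - \alpha\,\Delta x_1^k + \gamma\,\Delta g_2^k .
\]
On the other hand, the $(z_1,z_2)$-update in \eqref{Ryu:iteration_relaxed} reads $\Delta z_1^k = \lambda(x_3^k-x_1^k)$ and $\Delta z_2^k = \lambda(x_3^k-x_2^k)$. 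Equating the two expressions for each $\Delta z_i^k$ yields the key representations
\[
\lambda\,(x_3^k-x_1^k) = \Delta x_1^k + \gamma\,\Delta g_1^k, \qquad \lambda\,(x_3^k-x_2^k) = \alpha\,\Delta x_2^k - \alpha\,\Delta x_1^k + \gamma\,\Delta g_2^k .
\]

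To obtain \eqref{Ryu_relaxed:x_3-x_1}, I would apply Lemma~\ref{lemma:identity} with $a=x_3^k$, $b=x_1^k$, $c=x_1^{k+1}$, which gives
\[
\|x_3^k-x_1^k\|^2 - \|x_3^k-x_1^{k+1}\|^2 = -\|\Delta x_1^k\|^2 + 2\,\langle \Delta x_1^k,\, x_3^k-x_1^k\rangle ,
\]
and then substitute $x_3^k-x_1^k = \tfrac{1}{\lambda}(\Delta x_1^k+\gamma\,\Delta g_1^k)$ from the first key representation; collecting the $\|\Delta x_1^k\|^2$ terms produces exactly the right-hand side of \eqref{Ryu_relaxed:x_3-x_1}. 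For \eqref{Ryu_relaxed:x_3-x_2} the reasoning is identical: Lemma~\ref{lemma:identity} with $a=x_3^k$, $b=x_2^k$, $c=x_2^{k+1}$ gives $\|x_3^k-x_2^k\|^2 - \|x_3^k-x_2^{k+1}\|^2 = -\|\Delta x_2^k\|^2 + 2\langle \Delta x_2^k, x_3^k-x_2^k\rangle$, and substituting $x_3^k-x_2^k = \tfrac{1}{\lambda}(\alpha\,\Delta x_2^k - \alpha\,\Delta x_1^k + \gamma\,\Delta g_2^k)$ and grouping terms yields the stated identity.

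There is no genuine analytic difficulty here; the lemma is a bookkeeping computation. The one point that deserves care is the differencing of \eqref{eq:z2^k}: one must keep the factor $\alpha$ in front of \emph{both} $x_2^k$ and $x_1^k$, since it is precisely the $-\alpha\,\Delta x_1^k$ contribution to $\Delta z_2^k$ that produces the cross term $-\tfrac{2\alpha}{\lambda}\langle \Delta x_2^k, \Delta x_1^k\rangle$ in \eqref{Ryu_relaxed:x_3-x_2} and the coefficient $\tfrac{2\alpha}{\lambda}-1$ (rather than $\tfrac{2}{\lambda}-1$) in front of $\|\Delta x_2^k\|^2$; this asymmetry with \eqref{Ryu_relaxed:x_3-x_1} is exactly the footprint of the relaxation parameter $\alpha$ in the modified $f_2$-step. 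Apart from this, the proof is a direct substitution.
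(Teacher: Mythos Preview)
Your proposal is correct and follows essentially the same approach as the paper: both derive $\Delta z_i^k$ from differencing the optimality conditions \eqref{eq:z1^k}--\eqref{eq:z2^k}, apply Lemma~\ref{lemma:identity} with $a=x_3^k$, $b=x_i^k$, $c=x_i^{k+1}$, and then use the $(z_1,z_2)$-update $\Delta z_i^k=\lambda(x_3^k-x_i^k)$ to finish. The only cosmetic difference is that you equate the two expressions for $\Delta z_i^k$ first to solve for $x_3^k-x_i^k$ before substituting, whereas the paper substitutes $x_3^k-x_i^k=\tfrac{1}{\lambda}\Delta z_i^k$ first and then plugs in the $\Delta z_i^k$ formulas.
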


\begin{proof}
Using the notations \eqref{Ryu:notation}, we have from   \eqref{eq:z1^k} and \eqref{eq:z2^k} that
\begin{align} 
    \Delta z_1^k & = \gamma \Delta g_1^k + \Delta x_1^k {\text{, and}}  \label{eq:Deltaz1}\\
    \Delta z_2^k & = \gamma \Delta g_2^k + \alpha \Delta x_2^k - \alpha \Delta x_1^k. \label{eq:Deltaz2}
\end{align}
On the other hand, Lemma~\ref{lemma:identity} and the $(z_1,z_2)$-update in \eqref{Ryu:iteration_relaxed} yield 
\begin{equation*}
     \|x_3^k - x_i^k\|^2 - \|x_3^k - x_i^{k+1}\|^2 = - \|\Delta x_i^k\|^2 + 2 \langle \Delta x_i^k,  x_3^k - x_i^k\rangle = - \|\Delta x_i^k\|^2 + \dfrac{2}{\lambda} \langle \Delta x_i^k,  \Delta z_i^k\rangle  .
\end{equation*} 
Together with \eqref{eq:Deltaz1} and \eqref{eq:Deltaz2}, we get \eqref{Ryu_relaxed:x_3-x_1} and \eqref{Ryu_relaxed:x_3-x_2}, respectively. \qed 
\end{proof}

\begin{lemma} \label{lemma:Ryu-lemma:L-smooth} Under Assumption~\ref{a:blanket}, the sequences $(x_i^k)_{k}$ for $i=1,2,3$, and $(z_i^k)_k$ for $i=1,2$ generated by \eqref{Ryu:iteration}
 satisfy: \begin{multline*}
    \sum_{i=1}^2 f_i(x_i^k) - f_i(x_i^{k+1}) - \langle \nabla f_i(x_i^{k+1}) , x_3^k - x_i^{k+1} \rangle + \langle \nabla f_i(x_i^k) , x_3^k - x_i^k \rangle \\ \geq \sum_{i=1}^2\left(\dfrac{1}{2L_i}- \dfrac{\gamma}{\lambda} \right)\| \Delta g_i^k\|^2  - \dfrac{1}{\lambda}\langle \Delta g_1^k,   \Delta x_1^k \rangle  \\   - \dfrac{\alpha }{\lambda}\langle \Delta g_2^k,    \Delta x_2^k \rangle + \dfrac{\alpha}{\lambda}\langle \Delta g_2^k,   \Delta x_1^k \rangle .
\end{multline*}    
\end{lemma}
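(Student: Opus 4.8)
The plan is to estimate the four quantities $f_i(x_i^k) - f_i(x_i^{k+1}) - \langle \nabla f_i(x_i^{k+1}), x_3^k - x_i^{k+1}\rangle + \langle \nabla f_i(x_i^k), x_3^k - x_i^k\rangle$ for $i=1,2$ separately, using the two parts of Lemma~\ref{descent-lemma} together with the identities \eqref{eq:Deltaz1}--\eqref{eq:Deltaz2} relating $\Delta z_i^k$ to $\Delta g_i^k$ and $\Delta x_i^k$. First I would rewrite each summand by grouping the inner-product terms: adding and subtracting $\langle \nabla f_i(x_i^{k+1}), x_3^k - x_i^k\rangle$ (or $\langle \nabla f_i(x_i^k), x_3^k - x_i^{k+1}\rangle$) turns the bracket into $f_i(x_i^k) - f_i(x_i^{k+1}) - \langle \nabla f_i(x_i^{k+1}), x_i^k - x_i^{k+1}\rangle - \langle \Delta g_i^k, x_3^k - x_i^k\rangle$ (up to the obvious bookkeeping). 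The first three terms form exactly the expression controlled by the descent lemma's first inequality applied with $f = f_i$, $x = x_i^k$, $y = x_i^{k+1}$, which gives a lower bound of the form $-\tfrac{L_i}{2}\|\Delta x_i^k\|^2$; but since $f_i$ is also convex I would instead keep $f_i(x_i^k) - f_i(x_i^{k+1}) - \langle \nabla f_i(x_i^{k+1}), x_i^k - x_i^{k+1}\rangle \ge 0$ by convexity, and separately invoke the \emph{co-coercivity} inequality $\langle \Delta g_i^k, \Delta x_i^k\rangle \ge \tfrac{1}{2L_i}\|\Delta g_i^k\|^2$ to produce the $\tfrac{1}{2L_i}\|\Delta g_i^k\|^2$ terms appearing on the right-hand side.

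The remaining piece, $-\langle \Delta g_i^k, x_3^k - x_i^k\rangle$, is where the $z$-update must enter. From the relation $x_3^k - x_i^k = \tfrac{1}{\lambda}\Delta z_i^k$ (which is the $(z_1,z_2)$-update in \eqref{Ryu:iteration}, i.e.\ the $\alpha=1$ instance used here) I would substitute $\Delta z_1^k = \gamma \Delta g_1^k + \Delta x_1^k$ and $\Delta z_2^k = \gamma \Delta g_2^k + \alpha\Delta x_2^k - \alpha\Delta x_1^k$ from \eqref{eq:Deltaz1}--\eqref{eq:Deltaz2}. For $i=1$ this yields $-\tfrac{1}{\lambda}\langle \Delta g_1^k, \gamma\Delta g_1^k + \Delta x_1^k\rangle = -\tfrac{\gamma}{\lambda}\|\Delta g_1^k\|^2 - \tfrac{1}{\lambda}\langle \Delta g_1^k, \Delta x_1^k\rangle$, and for $i=2$ it yields $-\tfrac{1}{\lambda}\langle \Delta g_2^k, \gamma\Delta g_2^k + \alpha\Delta x_2^k - \alpha\Delta x_1^k\rangle = -\tfrac{\gamma}{\lambda}\|\Delta g_2^k\|^2 - \tfrac{\alpha}{\lambda}\langle \Delta g_2^k, \Delta x_2^k\rangle + \tfrac{\alpha}{\lambda}\langle \Delta g_2^k, \Delta x_1^k\rangle$. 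Summing over $i=1,2$ and combining the $-\tfrac{\gamma}{\lambda}\|\Delta g_i^k\|^2$ terms with the $\tfrac{1}{2L_i}\|\Delta g_i^k\|^2$ terms from co-coercivity gives precisely $\sum_{i=1}^2\big(\tfrac{1}{2L_i}-\tfrac{\gamma}{\lambda}\big)\|\Delta g_i^k\|^2$, plus the three cross terms $-\tfrac{1}{\lambda}\langle\Delta g_1^k,\Delta x_1^k\rangle - \tfrac{\alpha}{\lambda}\langle\Delta g_2^k,\Delta x_2^k\rangle + \tfrac{\alpha}{\lambda}\langle\Delta g_2^k,\Delta x_1^k\rangle$, which is exactly the claimed right-hand side.

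The main obstacle I anticipate is purely organizational rather than deep: keeping track of the correct telescoping of the inner products when splitting each bracket, and being careful about which of the two descent-lemma inequalities to use. Using the Lipschitz (upper) bound alone would produce $\|\Delta x_i^k\|^2$ terms instead of the $\|\Delta g_i^k\|^2$ terms in the target, so the key insight is that convexity lets us discard the nonnegative ``Bregman'' part entirely and convert the gradient-mismatch into $\|\Delta g_i^k\|^2$ via co-coercivity. One should also note this lemma is stated for the unrelaxed iteration \eqref{Ryu:iteration} (so $\alpha$ in \eqref{eq:Deltaz2} is the relaxation parameter of the $f_2$-step, which in \eqref{Ryu:iteration} plays its structural role); no smallness condition on $\gamma$ or $\lambda$ is needed here, since the inequality is an identity-plus-two-inequalities manipulation valid for all positive parameters. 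Once the substitutions are carried out and like terms collected, the result follows.
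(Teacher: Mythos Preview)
Your overall architecture matches the paper's proof exactly: the same add-and-subtract to isolate the Bregman remainder $f_i(x_i^k)-f_i(x_i^{k+1})-\langle\nabla f_i(x_i^{k+1}),x_i^k-x_i^{k+1}\rangle$, the same substitution $x_3^k-x_i^k=\tfrac{1}{\lambda}\Delta z_i^k$, and the same expansion via \eqref{eq:Deltaz1}--\eqref{eq:Deltaz2}. The algebra in your second paragraph is correct and reproduces the three cross terms and the $-\tfrac{\gamma}{\lambda}\|\Delta g_i^k\|^2$ contributions verbatim.

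There is, however, a genuine gap in how you obtain the $\tfrac{1}{2L_i}\|\Delta g_i^k\|^2$ terms. You propose to discard the Bregman remainder via convexity ($\ge 0$) and then ``separately'' invoke co-coercivity. But once the Bregman term is thrown away, nothing remains to which co-coercivity can be applied: the only surviving pieces are $-\tfrac{\gamma}{\lambda}\|\Delta g_i^k\|^2$ and the cross terms $-\tfrac{1}{\lambda}\langle\Delta g_1^k,\Delta x_1^k\rangle$, $-\tfrac{\alpha}{\lambda}\langle\Delta g_2^k,\Delta x_2^k\rangle$, $+\tfrac{\alpha}{\lambda}\langle\Delta g_2^k,\Delta x_1^k\rangle$, and these are kept \emph{as is} in the target right-hand side. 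Applying co-coercivity to, say, $-\tfrac{1}{\lambda}\langle\Delta g_1^k,\Delta x_1^k\rangle$ goes the wrong way (it gives an upper bound, not a lower bound). As written, your chain of inequalities yields only $\sum_i(-\tfrac{\gamma}{\lambda})\|\Delta g_i^k\|^2$ plus the cross terms, which is strictly weaker than the claim.

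The fix is not to bound the Bregman remainder by $0$ but by $\tfrac{1}{2L_i}\|\Delta g_i^k\|^2$ directly, using the standard inequality for convex $L$-smooth functions
\[
f(x)-f(y)-\langle\nabla f(y),x-y\rangle \;\ge\; \frac{1}{2L}\,\|\nabla f(x)-\nabla f(y)\|^2 .
\]
This is precisely what the paper does in one line (attributing it to Lemma~\ref{descent-lemma}); it is equivalent to, but not literally the same statement as, the co-coercivity inequality you quote. With this single correction your argument goes through and coincides with the paper's.
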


\begin{proof} For $i=1,2$, we have from Lemma~\ref{descent-lemma} and the $(z_1,z_2)$-update rule in \eqref{Ryu:iteration_relaxed} that
\begin{align}
    &f_i(x_i^k) - f_i(x_i^{k+1}) - \langle \nabla f_i(x_i^{k+1}) , x_3^k - x_i^{k+1} \rangle + \langle \nabla f_i(x_i^k) , x_3^k - x_i^k \rangle \notag \\ 
         = & f_i(x_i^k) - f_i(x_i^{k+1}) - \langle \nabla f_i(x_i^{k+1}) , x_i^k - x_i^{k+1} \rangle - \langle  \Delta g_i^k , x_3^k - x_i^k \rangle \notag \\
         \geq & \dfrac{1}{2L_i}\| \Delta g_i^k\|^2 - \langle \Delta g_i^k, x_3^k - x_i^k \rangle \notag\\
         = & \dfrac{1}{2L_i}\| \Delta g_i^k\|^2 - \dfrac{1}{\lambda}\langle \Delta g_i^k, \Delta z_i^k \rangle , \label{eq:diffs}
\end{align}
 Meanwhile, we have from \eqref{eq:Deltaz1} that 
\begin{equation}
    \langle \Delta g_1^k, \Delta z_1^k \rangle  =  \gamma \| \Delta g_1^k \|^2 + \langle \Delta g_1^k,   \Delta x_1^k \rangle .
    \label{eq:inner1}
\end{equation} 
On the other hand, \eqref{eq:Deltaz2} yields 
\begin{equation}
      \langle \Delta g_2^k, \Delta z_2^k \rangle  = \gamma \|\Delta g_2^k\|^2 +  \alpha \langle \Delta g_2^k,    \Delta x_2^k \rangle - \alpha \langle \Delta g_2^k,   \Delta x_1^k \rangle .
      \label{eq:inner2}
\end{equation}
Combining \eqref{eq:diffs}, \eqref{eq:inner1} and \eqref{eq:inner2} gives the desired inequality. \qed 
\end{proof}

\begin{lemma}
\label{lemma:lower_alpha_epsilons}
    Let $\lambda\in (0,2)$ and let $\underline{\alpha} \coloneqq \frac{2\lambda - 3 + \sqrt{9 - 4\lambda}}{2}$. Then the following holds:
    \begin{enumerate}
        \item[(i)] $\underline{\alpha}\in \left( \frac{\lambda}{2}, 1\right)$.
        \item[(ii)] The interval $ \left( \dfrac{\alpha}{2\alpha-\lambda},\dfrac{2-\lambda}{1-\alpha}\right)$ is nonempty for any  $\alpha \in (\underline{\alpha},1)$.
        \item[(iii)]  For $\epsilon_1,\epsilon_2>0$ and $\alpha \in (\underline{\alpha},1)$, define the constants
    \begin{equation}
        \begin{array}{rcl}
        \bar{\gamma}_1 & \coloneqq & \dfrac{\lambda}{2L_2} - \dfrac{\alpha}{2\varepsilon_2} ,\\
        \bar{\gamma}_2 & \coloneqq & \dfrac{\alpha (2-\lambda - (1-\alpha)\epsilon_1)}{\alpha \epsilon_2 + 2(1-\alpha)L_1} ,\\
        \bar{\gamma}_3 & \coloneqq &\dfrac{(1-\alpha)(\epsilon_1 (2\alpha - \lambda)-\alpha)}{2\alpha L_2\epsilon_1},
        \end{array}
        \label{eq:upperbounds}
    \end{equation} 
    and the intervals
    \begin{equation}
             I_1 \coloneqq  \left( \frac{\alpha}{2\alpha-\lambda}, \frac{2-\lambda}{1-\alpha}\right) \quad \text{and} \quad  I_2  \coloneqq  \left( \frac{\alpha L_2}{\lambda},+\infty\right).
        \label{eq:intervals}
    \end{equation}
    If $\epsilon_j\in I_j$ for $j=1,2$, 
    then $\bar{\gamma}_i$ is strictly positive for $i=1,2,3$.
    \end{enumerate}
\end{lemma}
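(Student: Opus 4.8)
The plan is to prove the three items of Lemma~\ref{lemma:lower_alpha_epsilons} in order, since each subsequent part leans on the previous one. For part~(i), I would study the quadratic behavior of $\underline{\alpha}$ as a function of $\lambda\in(0,2)$. Writing $\underline{\alpha} = \frac{2\lambda - 3 + \sqrt{9-4\lambda}}{2}$, one checks directly that $\underline{\alpha} > \frac{\lambda}{2}$ is equivalent to $\sqrt{9-4\lambda} > 3 - \lambda$, i.e. (after squaring, valid since $3-\lambda>1>0$ for $\lambda<2$) to $9 - 4\lambda > 9 - 6\lambda + \lambda^2$, i.e. $0 > \lambda^2 - 2\lambda = \lambda(\lambda-2)$, which holds for $\lambda\in(0,2)$. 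Similarly $\underline{\alpha} < 1$ is equivalent to $\sqrt{9-4\lambda} < 5 - 2\lambda$; since $5-2\lambda > 1 > 0$ on $(0,2)$, squaring gives $9 - 4\lambda < 25 - 20\lambda + 4\lambda^2$, i.e. $0 < 4\lambda^2 - 16\lambda + 16 = 4(\lambda-2)^2$, which holds for $\lambda\neq 2$. This settles~(i).

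For part~(ii), the interval $I_1 = \left(\frac{\alpha}{2\alpha-\lambda}, \frac{2-\lambda}{1-\alpha}\right)$ is nonempty precisely when $\frac{\alpha}{2\alpha-\lambda} < \frac{2-\lambda}{1-\alpha}$, noting that for $\alpha\in(\underline{\alpha},1) \subseteq (\tfrac\lambda2,1)$ both denominators $2\alpha-\lambda$ and $1-\alpha$ are strictly positive. Cross-multiplying, the condition becomes $\alpha(1-\alpha) < (2-\lambda)(2\alpha-\lambda)$. Expanding and collecting terms in $\alpha$, this rearranges to a quadratic inequality $\alpha^2 - (2\lambda-3)\alpha + \lambda^2 - 2\lambda > 0$ (I will recompute the exact coefficients carefully when writing the final proof). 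The roots of the associated quadratic $\alpha^2 - (2\lambda-3)\alpha + (\lambda^2-2\lambda)$ are exactly $\frac{(2\lambda-3)\pm\sqrt{(2\lambda-3)^2 - 4(\lambda^2-2\lambda)}}{2} = \frac{(2\lambda-3)\pm\sqrt{9-4\lambda}}{2}$, so the larger root is precisely $\underline{\alpha}$. Since the quadratic opens upward, the inequality holds for all $\alpha > \underline{\alpha}$ (in particular on $(\underline{\alpha},1)$), giving~(ii).

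For part~(iii), the goal is to show that under $\epsilon_1 \in I_1$, $\epsilon_2 \in I_2$, each of $\bar\gamma_1, \bar\gamma_2, \bar\gamma_3$ is strictly positive. I would treat them one at a time. For $\bar\gamma_1 = \frac{\lambda}{2L_2} - \frac{\alpha}{2\epsilon_2}$: positivity is equivalent to $\epsilon_2 > \frac{\alpha L_2}{\lambda}$, which is exactly the condition $\epsilon_2 \in I_2$. For $\bar\gamma_3 = \frac{(1-\alpha)(\epsilon_1(2\alpha-\lambda) - \alpha)}{2\alpha L_2 \epsilon_1}$: the denominator is positive (as $\alpha\in(0,1)$, $L_2>0$, $\epsilon_1>0$), so positivity is equivalent to $\epsilon_1(2\alpha-\lambda) > \alpha$, i.e. $\epsilon_1 > \frac{\alpha}{2\alpha-\lambda}$ (using $2\alpha-\lambda>0$), which is the left endpoint condition of $\epsilon_1 \in I_1$. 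For $\bar\gamma_2 = \frac{\alpha(2-\lambda - (1-\alpha)\epsilon_1)}{\alpha\epsilon_2 + 2(1-\alpha)L_1}$: the denominator is positive, so positivity is equivalent to $2-\lambda - (1-\alpha)\epsilon_1 > 0$, i.e. $\epsilon_1 < \frac{2-\lambda}{1-\alpha}$, which is the right endpoint condition of $\epsilon_1 \in I_1$. Thus all three follow directly from the constraints on $\epsilon_1,\epsilon_2$.

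The routine part is the algebra; the only genuinely delicate point is part~(ii), where one must correctly expand the cross-multiplied inequality and recognize that $\underline\alpha$ is the larger root of the resulting quadratic — a sign error there would derail the argument. I would also be careful to record explicitly, at the start of part~(iii)'s proof, that $\alpha\in(\underline\alpha,1)$ implies $2\alpha-\lambda > 2\underline\alpha - \lambda > 0$ (from part~(i)) and $1-\alpha>0$, since these positivity facts are what allow the inequalities to be manipulated without flipping direction. No deep tools are needed beyond parts~(i) and~(ii) and elementary inequality manipulation.
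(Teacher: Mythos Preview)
Your proposal is correct and follows essentially the same approach as the paper: part~(ii) is handled by cross-multiplying and identifying $\underline{\alpha}$ as the larger root of the resulting quadratic $\alpha^2 + (3-2\lambda)\alpha + (\lambda^2-2\lambda)$, and parts~(i) and~(iii) are elementary verifications (the paper dismisses them as ``clear'' and ``straightforward calculations,'' respectively, so your explicit arguments are a welcome expansion). Your planned caution about recomputing the coefficients in~(ii) and recording the positivity of $2\alpha-\lambda$ and $1-\alpha$ at the outset of~(iii) is well placed.
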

\begin{proof}
   {Part (i) is clear from the definition of $\underline{\alpha}$. To prove (ii), note first that for $\alpha\in (\underline{\alpha},1)$, we have $1-\alpha>0$ and $2\alpha - \lambda>0$ by part (i). Hence, the indicated interval in (ii) is nonempty provided that  $ (2-\lambda)(2\alpha - \lambda)>\alpha (1-\alpha)$. The latter condition holds true if and only if  $\alpha^2 + \alpha(3-2\lambda) - \lambda(2-\lambda)>0$, which is equivalent to having either $\alpha <\frac{2\lambda - 3 - \sqrt{9-4\lambda}}{2}$ or $\alpha > \frac{2\lambda - 3 + \sqrt{9-4\lambda}}{2} =  \underline{\alpha} $. Thus, the claim of part (ii) follows. Finally, part (iii) follows from straightforward calculations.}  \qed 
\end{proof}
With these lemmas in place, we are now ready to present the first main result of this paper. We establish the sufficient descent property of the \RtE, provided that the stepsize is chosen sufficiently small. In particular, we restrict the stepsize $\gamma$ in the interval 
\begin{equation}
    \Gamma \coloneqq \left(0, \min \{ \bar{\gamma}_0, \bar{\gamma}_1 \}\right] \cap \left(0, \min \left\{\bar{\gamma}_2,\bar{\gamma}_3 ,{\frac{1}{L_1+L_2}} \right\} \right),
    \label{eq:stepsizeinterval}
\end{equation}
where  $\bar{\gamma_0}\coloneqq \frac{\lambda}{2L_1}$, and$\bar{\gamma}_i$ is given by \eqref{eq:upperbounds}  for $i=1,2,3$, with $\epsilon_j$ taken from  $I_j$ given in \eqref{eq:intervals} for $j=1,2$. 


\begin{theorem}[\RtE~sufficient descent]\label{thm:Ryu_sufficient_descent}
Suppose that Assumption~\ref{a:blanket} holds. Let $\alpha \in (\underline{\alpha},1)$ where $\underline{\alpha} \coloneqq \frac{2\lambda - 3 + \sqrt{9 - 4\lambda}}{2}$, and $\lambda\in (0,2)$. Let  $\gamma \in \Gamma$, where $\Gamma$ is given in \eqref{eq:stepsizeinterval}. For the sequences $(z_i^k)_k$ for $i=1,2$ generated by \eqref{Ryu:iteration_relaxed}, there exists $M=M(\gamma)>0$ such that for all {$k\geq 0$}, \begin{equation*}
    \varphi_{\gamma}^{\texttt{Ryu}}(z_1^k,z_2^k)  \geq \varphi_{\gamma}^{\texttt{Ryu}}(z_1^{k+1},z_2^{k+1}) + M (\|z_1^{k+1} - z_1^k\|^2+\|z_2^{k+1} - z_2^k\|^2).
\end{equation*}
 
\end{theorem}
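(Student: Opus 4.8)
The plan is to express the one-step change $\RE(z_1^k,z_2^k) - \RE(z_1^{k+1},z_2^{k+1})$ in terms of the primal displacements $\Delta x_i^k$ and gradient displacements $\Delta g_i^k$, and then show the resulting quadratic form is positive definite when $\gamma \in \Gamma$. The starting point is the two-sided estimate from Proposition~\ref{Ryu:sandwich}: at iteration $k$ I would use the lower bound (ii) evaluated at $(z_1^k,z_2^k)$, namely $\RE(z_1^k,z_2^k) \geq \varphi(x_3^k) + \frac{\alpha - \gamma L_1}{2\gamma}\|x_3^k-x_1^k\|^2 + \frac{(1-\alpha)-\gamma L_2}{2\gamma}\|x_3^k-x_2^k\|^2$, while at iteration $k+1$ I would use the identity-form expression of $\RE(z_1^{k+1},z_2^{k+1})$ coming from \eqref{eq:RE} with $y=x_3^k$ as a (suboptimal) test point, which gives an upper bound $\RE(z_1^{k+1},z_2^{k+1}) \leq f_3(x_3^k) + \sum_i [f_i(x_i^{k+1}) + \langle \nabla f_i(x_i^{k+1}), x_3^k - x_i^{k+1}\rangle + \frac{1}{2\gamma_i}\|x_3^k - x_i^{k+1}\|^2]$. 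Subtracting, the $f_3(x_3^k)$ terms cancel and I am left with $\sum_i f_i(x_i^k) - f_i(x_i^{k+1}) - \langle \nabla f_i(x_i^{k+1}), x_3^k - x_i^{k+1}\rangle + \langle \nabla f_i(x_i^k), x_3^k - x_i^k\rangle$, plus the proximal-term difference $\sum_i \frac{1}{2\gamma_i}(\|x_3^k - x_i^k\|^2 - \|x_3^k - x_i^{k+1}\|^2)$, plus the leftover curvature terms $\frac{\alpha-\gamma L_1}{2\gamma}\|x_3^k-x_1^k\|^2 + \frac{(1-\alpha)-\gamma L_2}{2\gamma}\|x_3^k-x_2^k\|^2$.

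The three technical lemmas are exactly tailored to handle these three pieces. Lemma~\ref{lemma:Ryu-lemma:L-smooth} lower-bounds the first $f_i$-difference block by $\sum_i (\frac{1}{2L_i} - \frac{\gamma}{\lambda})\|\Delta g_i^k\|^2 - \frac{1}{\lambda}\langle \Delta g_1^k, \Delta x_1^k\rangle - \frac{\alpha}{\lambda}\langle \Delta g_2^k, \Delta x_2^k\rangle + \frac{\alpha}{\lambda}\langle \Delta g_2^k, \Delta x_1^k\rangle$. Lemma~\ref{lemma:diff_of_sqrd_norms} rewrites $\|x_3^k - x_i^k\|^2 - \|x_3^k - x_i^{k+1}\|^2$ (note $\gamma_1 = \gamma/\alpha$, $\gamma_2 = \gamma/(1-\alpha)$, so $\frac{1}{2\gamma_1} = \frac{\alpha}{2\gamma}$ and $\frac{1}{2\gamma_2} = \frac{1-\alpha}{2\gamma}$) in terms of $\|\Delta x_i^k\|^2$, $\langle \Delta x_i^k, \Delta g_i^k\rangle$, and $\langle \Delta x_1^k, \Delta x_2^k\rangle$. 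After substituting both and also re-expressing the curvature terms $\|x_3^k - x_i^k\|^2$ via the $(z_1,z_2)$-update ($x_3^k - x_i^k = \frac{1}{\lambda}\Delta z_i^k$, and $\Delta z_i^k$ is linear in $\Delta x_i^k, \Delta g_i^k, \Delta x_1^k$ by \eqref{eq:Deltaz1}--\eqref{eq:Deltaz2}), I obtain a quadratic form $Q$ in the five vectors $\Delta x_1^k, \Delta x_2^k, \Delta g_1^k, \Delta g_2^k$ (with $\Delta x_1^k, \Delta x_2^k$ coupled). The goal is then $Q \geq c(\|\Delta x_1^k\|^2 + \|\Delta g_1^k\|^2 + \|\Delta x_2^k\|^2 + \|\Delta g_2^k\|^2)$ for some $c>0$; since $\|\Delta z_i^k\|^2 \leq C(\|\Delta x_i^k\|^2 + \|\Delta g_i^k\|^2 + \|\Delta x_1^k\|^2)$ and conversely (using $\gamma < 1/(L_1+L_2)$ and nonexpansiveness-type bounds), this yields the claimed descent with $M = M(\gamma)$ proportional to $c$, after noting $\|z_i^{k+1}-z_i^k\|^2 = \lambda^2\|x_3^k - x_i^k\|^2$.

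The main obstacle — and the reason the constants $\underline\alpha$, $\bar\gamma_0,\dots,\bar\gamma_3$, and the auxiliary parameters $\epsilon_1 \in I_1$, $\epsilon_2 \in I_2$ appear — is establishing positive definiteness of this quadratic form. The diagonal $\|\Delta g_i^k\|^2$ coefficients are $\frac{1}{2L_i} - \frac{\gamma}{\lambda} + \frac{1}{2\gamma_i}\cdot\frac{2\gamma}{\lambda}$-type expressions, forcing $\gamma \le \bar\gamma_0 = \frac{\lambda}{2L_1}$ for $i=1$; the $\|\Delta x_i^k\|^2$ coefficients come out as $\frac{\alpha}{2\gamma}(\frac{2}{\lambda}-1)$-type and $\frac{1-\alpha}{2\gamma}(\frac{2\alpha}{\lambda}-1)$-type quantities (needing $\lambda<2$ and $\alpha > \lambda/2$, guaranteed by $\alpha > \underline\alpha$ via Lemma~\ref{lemma:lower_alpha_epsilons}(i)); and the cross terms $\langle \Delta g_i^k, \Delta x_i^k\rangle$, $\langle \Delta g_2^k, \Delta x_1^k\rangle$, $\langle \Delta x_1^k, \Delta x_2^k\rangle$ must be absorbed by Young's inequality. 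The role of $\epsilon_1, \epsilon_2$ is precisely to parametrize these Young splittings; Lemma~\ref{lemma:lower_alpha_epsilons}(ii)--(iii) certifies that the intervals $I_1, I_2$ are nonempty and that the resulting upper bounds $\bar\gamma_1,\bar\gamma_2,\bar\gamma_3$ are strictly positive, so that $\Gamma$ in \eqref{eq:stepsizeinterval} is a nonempty interval of the form $(0,\gamma^\ast]$. I would organize the positive-definiteness verification as a sequence of completed-square / Young-inequality steps, one per off-diagonal term, keeping track of exactly how much of each diagonal coefficient is consumed, and conclude that each net diagonal coefficient is bounded below by a positive constant precisely when $\gamma \le \min\{\bar\gamma_0,\bar\gamma_1,\bar\gamma_2,\bar\gamma_3, \frac{1}{L_1+L_2}\}$, i.e. $\gamma \in \Gamma$. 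The final cosmetic step is converting the lower bound in $\|\Delta x_i\|,\|\Delta g_i\|$ into one in $\|\Delta z_i\|=\|z_i^{k+1}-z_i^k\|$ using \eqref{eq:Deltaz1}--\eqref{eq:Deltaz2} and the triangle inequality.
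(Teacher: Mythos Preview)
Your subtraction at the very start is set up incorrectly, and this propagates an extra block of ``curvature terms'' that should not be there. If you actually use Proposition~\ref{Ryu:sandwich}(ii) at iteration $k$, the right-hand side is $\varphi(x_3^k)+\frac{\alpha-\gamma L_1}{2\gamma}\|x_3^k-x_1^k\|^2+\frac{(1-\alpha)-\gamma L_2}{2\gamma}\|x_3^k-x_2^k\|^2$; subtracting the test-point upper bound at $k+1$ then yields
\[
\sum_{i=1}^2\Bigl[f_i(x_3^k)-f_i(x_i^{k+1})-\langle \nabla f_i(x_i^{k+1}),x_3^k-x_i^{k+1}\rangle-\tfrac{1}{2\gamma_i}\|x_3^k-x_i^{k+1}\|^2\Bigr]+\text{(curvature)},
\]
which is \emph{not} the expression you wrote down. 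The terms $f_i(x_i^k)$, $\langle\nabla f_i(x_i^k),x_3^k-x_i^k\rangle$, and $\frac{1}{2\gamma_i}\|x_3^k-x_i^k\|^2$ that you list come from the \emph{exact} identity for $\RE(z_1^k,z_2^k)$ (Proposition~\ref{lemma:Ryu:envelope-derivation} with $y=x_3^k$ as the minimizer), not from~(ii). You cannot have both: either use the exact value at $k$---and then there are no ``leftover curvature terms''---or use~(ii), in which case Lemma~\ref{lemma:Ryu-lemma:L-smooth} does not match the resulting expression. The paper does the former; once you drop the spurious curvature block, your plan coincides with the paper's.

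Two smaller corrections. First, you do not need the full bound $Q\geq c\sum_i(\|\Delta x_i^k\|^2+\|\Delta g_i^k\|^2)$: the paper only secures \emph{nonnegative} coefficients on $\|\Delta g_i^k\|^2$ (they may vanish when $\gamma=\bar\gamma_0$ or $\bar\gamma_1$) and \emph{strictly positive} ones on $\|\Delta x_i^k\|^2$; this suffices because $\|\Delta z_i^k\|$ is controlled by $\|\Delta x_i^k\|$ alone via $L_i$-Lipschitzness of $\nabla f_i$ in \eqref{eq:Deltaz1}--\eqref{eq:Deltaz2}. Second, the cross terms $\langle \Delta x_i^k,\Delta g_i^k\rangle$ are not handled purely by Young's inequality: since $\alpha<1$, the sign of the $\langle \Delta x_1^k,\Delta g_1^k\rangle$ coefficient is negative and the paper uses $\langle \Delta x_1^k,\Delta g_1^k\rangle\leq L_1\|\Delta x_1^k\|^2$, while $\frac{1-2\alpha}{\lambda}\langle \Delta x_2^k,\Delta g_2^k\rangle$ is split as $\frac{1-\alpha}{\lambda}\langle\cdot,\cdot\rangle-\frac{\alpha}{\lambda}\langle\cdot,\cdot\rangle$ and bounded using convexity ($\geq 0$) and $L_2$-smoothness, respectively. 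Young's inequality with $\epsilon_1,\epsilon_2$ is reserved for the two remaining cross terms $\langle\Delta x_1^k,\Delta x_2^k\rangle$ and $\langle\Delta x_1^k,\Delta g_2^k\rangle$.
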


\begin{proof}
    From the definition of the relaxed Ryu envelope, we have 
    \begin{equation*}
    \begin{array}{rl}
         & \varphi_{\gamma}^{\texttt{Ryu}}(z_1^{k+1},z_2^{k+1}) \\
         & \leq f_3(x_3^k)  + \sum_{i=1}^2 \left(f_i(x_i^{k+1}) + \langle \nabla f_i(x_i^{k+1}), x_3^k - x_i^{k+1}\rangle +  \dfrac{1}{2\gamma_i} \|x_3^k-x_i^{k+1}\|^2 \right).
    \end{array}
    \end{equation*}
    Thus, together with Lemma \ref{lemma:Ryu:envelope-derivation}, we have 
    \begin{equation*}
        \begin{array}{rl}
           &  \varphi_{\gamma}^{\texttt{Ryu}}(z_1^k,z_2^k) - \varphi^{\texttt{Ryu}}_{\gamma}(z_1^{k+1},z_2^{k+1}) \\
           &  \geq \displaystyle\sum_{i=1}^2 \left(f_i(x_i^k) - f_i(x_i^{k+1}) + \langle \nabla f_i(x_i^k), x_3^k - x_i^k\rangle - \langle \nabla f_i(x_i^{k+1}), x_3^k - x_i^{k+1}\rangle \right) \\ 
           &   +  \displaystyle\sum_{i=1}^2 \dfrac{1}{2\gamma_i} \left( \|x_3^k-x_i^k\|^2 - \|x_3^k-x_i^{k+1}\|^2  \right)
        \end{array}
    \end{equation*} 
    Using Lemmas~\ref{lemma:diff_of_sqrd_norms} and~\ref{lemma:Ryu-lemma:L-smooth}, we obtain   
    \begin{equation*}
    \begin{array}{rl}
           &\varphi_{\gamma}^{\texttt{Ryu}}(z_1^k,z_2^k) - \varphi^{\texttt{Ryu}}_{\gamma}(z_1^{k+1},z_2^{k+1}) \\
           
           &  \geq  \left(\dfrac{1}{2L_1} -\dfrac{\gamma}{\lambda} \right)\norm{\Delta g_1^k}^2 + \left( \dfrac{1}{2L_2} - \dfrac{\gamma}{\lambda} \right) \norm{\Delta g_2^k}^2 \\
           & + \left( \dfrac{\alpha }{\gamma \lambda} - \dfrac{\alpha}{2\gamma}\right)\norm{\Delta x_1^k}^2 + \left(\dfrac{\alpha(1-\alpha) }{\gamma\lambda} -\dfrac{1-\alpha}{2\gamma}\right) \norm{\Delta x_2^k}^2 \\
           & + \dfrac{\alpha-1}{\lambda}\inner{\Delta x_1^k}{\Delta g_1^k}  + \dfrac{1-2\alpha}{\lambda}\inner{\Delta x_2^k}{\Delta g_2^k}  \\
           & - \dfrac{(1-\alpha)\alpha}{\gamma\lambda}\inner{\Delta x_2^k }{\Delta x_1^k} + \dfrac{\alpha}{\lambda}\inner{\Delta x_1^k}{\Delta g_2^k}.
        \end{array}
    \end{equation*} 
Since $\alpha<1$, then
    \begin{align*}
         \dfrac{\alpha-1}{\lambda}\inner{\Delta x_1^k}{\Delta g_1^k}  & \geq \dfrac{(\alpha-1)L_1}{\lambda}  \norm{\Delta x_1^k}^2, \\
          \dfrac{1-2\alpha}{\lambda}\inner{\Delta x_2^k}{\Delta g_2^k}  & = \dfrac{1-\alpha}{\lambda}\inner{\Delta x_2^k}{\Delta g_2^k} - \dfrac{\alpha}{\lambda}\inner{\Delta x_2^k}{\Delta g_2^k} \geq 0 - \dfrac{\alpha L_2}{\lambda}  \norm{\Delta x_2^k}^2 ,
    \end{align*}
where the first inequality holds by $L_1$-smoothness of $f_1$, while the second inequality holds by the convexity of $f_2$ and $L_2$-smoothness of $ f_2$. Moreover, by Young's inequality, we have 
    \begin{align*}
            - \dfrac{(1-\alpha)\alpha}{\gamma\lambda}\inner{\Delta x_2^k }{\Delta x_1^k} & \geq - \dfrac{(1-\alpha)\alpha}{\gamma\lambda} \left( \dfrac{\norm{\Delta x_2^k}^2}{2\epsilon_1} + \dfrac{\epsilon_1 \norm{\Delta x_1^k}^2}{2}\right) ,\\ 
          \dfrac{\alpha}{\lambda}\inner{\Delta x_1^k}{\Delta g_2^k}& \geq -\dfrac{\alpha}{\lambda}\norm{\Delta x_1^k} \norm{\Delta g_2^k} \geq -\frac{\alpha}{\lambda} \left( \dfrac{\norm{\Delta g_2^k}^2}{2\epsilon_2} + \dfrac{\epsilon_2\norm{\Delta x_1^k}^2}{2}\right),
    \end{align*}
where $\epsilon_1,\epsilon_2>0$ are arbitrary. With these, we obtain 
 \begin{equation}
    \begin{array}{rl}
           & \varphi_{\gamma}^{\texttt{Ryu}}(z_1^k,z_2^k) - \varphi^{\texttt{Ryu}}_{\gamma}(z_1^{k+1},z_2^{k+1})  \\
           & \geq  C_0\norm{\Delta g_1^k}^2 + C_1 \norm{\Delta g_2^k}^2+ C_2\norm{\Delta x_1^k}^2 +C_3\norm{\Delta x_2^k}^2 ,
        \end{array}
        \label{eq:suffdescent_almost}
    \end{equation} 
  where the constants $C_i$ for $i=0,1,2,3$ are given by\begin{align*}
        C_0 & = \dfrac{1}{2L_1} -\dfrac{\gamma}{\lambda}, \\
        C_1 & = \dfrac{1}{2L_2} - \dfrac{\gamma}{\lambda} - \dfrac{\alpha}{2\lambda\epsilon_2}, \\
        C_2 &= \dfrac{\alpha }{\gamma \lambda} - \dfrac{\alpha}{2\gamma} + \dfrac{(\alpha-1)L_1}{\lambda}  - \dfrac{(1-\alpha)\alpha\epsilon_1}{2\gamma\lambda} - \dfrac{\alpha\epsilon_2}{2\lambda}{\text{, and}} \\
        C_3 & = \dfrac{\alpha(1-\alpha) }{\gamma\lambda} -\dfrac{1-\alpha}{2\gamma}- \dfrac{\alpha L_2}{\lambda} -\dfrac{(1-\alpha)\alpha}{2\gamma\lambda\epsilon_1}.
    \end{align*}
Choosing $\epsilon_j\in I_j$ for $j=1,2$, it is not difficult to compute that $C_0,C_1\geq 0$ since $\gamma\leq \min\{ \bar{\gamma}_0,\bar{\gamma}_1 \}$. In addition,  $C_2,C_3> 0$ since $\gamma<\min\{ \bar{\gamma}_2,\bar{\gamma}_3 \}$. Hence, for the given stepsize $\gamma$, we obtain from \eqref{eq:suffdescent_almost} that
 \begin{equation}
    \begin{array}{rl}
           & \varphi_{\gamma}^{\texttt{Ryu}}(z_1^k,z_2^k) - \varphi^{\texttt{Ryu}}_{\gamma}(z_1^{k+1},z_2^{k+1}) \geq  C_2\norm{\Delta x_1^k}^2 +C_3\norm{\Delta x_2^k}^2 .
        \end{array}
        \label{eq:suffdescent_almost2}
    \end{equation} 
Meanwhile, from \eqref{eq:Deltaz1}, \eqref{eq:Deltaz2} and $L_i$-smoothness of $f_i$ for $i=1,2$, it follows \begin{equation*}
        \|\Delta z_1^k\|^2 \leq (1 + L_1\gamma)^2 \|\Delta x_1^k\|^2{\text{, and}}
    \end{equation*} \begin{equation*}
        \|\Delta z_2^k\|^2 \leq 2(\alpha + \gamma L_2)^2 \|\Delta x_2^k\|^2 + 2 \alpha^2 \|\Delta x_1^k\|^2.
    \end{equation*} Defining \begin{equation*}
        C_4 = \min\{C_2, C_3\}, \quad C_5 = \max\{ 2\alpha^2 + \big(1 + L_1\gamma\big)^2, 2(\alpha +  L_2 \gamma)^2\},
    \end{equation*} we  obtain from \eqref{eq:suffdescent_almost2} that \begin{equation*}
        \begin{array}{rcl}
             \varphi_{\gamma}^{\texttt{Ryu}}(z^k) - \varphi_{\gamma}^{\texttt{Ryu}}(z^{k+1}) & \geq & C_4( \|\Delta x_1^k\|^2 +  \|\Delta x_2^k\|^2)  \\
             & \geq & \dfrac{C_4}{C_5}( \|\Delta z_1^k\|^2 +  \|\Delta z_2^k\|^2).
        \end{array}
    \end{equation*} The proof concludes by setting $M = C_4/C_5 $. \qed
\end{proof}

Theorem~\ref{thm:Ryu_sufficient_descent} suggests that our modification of Ryu's three-operator splitting method behaves like a descent method for the suitably defined \RtE. Hence, one could expect this method to converge. In the next section, we formalize this idea.

\subsection{Convergence properties of the modified Ryu's algorithm}

The convergence analysis of the method in \eqref{Ryu:iteration_relaxed} relies on a particular relationship between the \RtE~and a  Lagrangian associated with a reformulation of problem \eqref{primal-problem}, which we proceed to define. By duplicating variables, problem \eqref{primal-problem} can be reformulated as follows \begin{equation*}
    \min_{x_1, x_2, x_3 \in \R^d} f_1(x_1) + f_2(x_2) + f_3(x_3) \quad \mbox{ s.t. } x_1 = x_3, \; x_2 = x_3.
\end{equation*} We define the following Lagrangian associated with this problem reformulation: \begin{equation*}
    \mathcal{L}_{\beta_1,\beta_2}(x_1,x_2, x_3, \mu_1, \mu_2) = f_3(x_3)+\sum_{i=1}^2 \left( f_i(x_i) + \langle \mu_i, x_i - x_3 \rangle  + \dfrac{\beta_i}{2} \|x_i - x_3\|^2\right),
\end{equation*} where, for $i=1,2$, $\mu_i \in \R^d$ is a Lagrangian multiplier associated with the constraint $x_i=x_3$. The next result states the aformentioned relationship between the \RtE~and the augmented Lagrangian.

\begin{lemma} \label{l:lag}
Suppose Assumption~\ref{a:blanket} holds. Let $\gamma \in (0, \frac{1}{L_1+L_2})$,  and $\lambda,\alpha>0$. Denoting \begin{equation*}
    \xi^k = \big(x_1^k,x_2^k, x_3^k, \gamma^{-1}(x_1^k - z_1^k), \gamma^{-1}(\alpha (x_2^k-x_1^k) - z_2^k)\big),
\end{equation*} then
    \begin{equation*}
    (\forall k \geq 1)\; \RE(z_1^k,z_2^k) = \mathcal{L}_{\frac{1}{\gamma_1},\frac{1}{\gamma_2}}(\xi^k).
\end{equation*}
\end{lemma}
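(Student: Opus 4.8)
The plan is to unwind both sides of the claimed identity and match them term by term. Start from the definition of the \RtE~in \eqref{eq:RE}, evaluated at $(z_1^k,z_2^k)$: this is the minimum over $y$ of $f_3(y) + \sum_{i=1}^2[f_i(x_i^k) + \langle y - x_i^k, \nabla f_i(x_i^k)\rangle + \frac{1}{2\gamma_i}\|y - x_i^k\|^2]$, and by Proposition~\ref{lemma:Ryu:envelope-derivation} the minimizer is exactly $x_3^k$. So $\RE(z_1^k,z_2^k) = f_3(x_3^k) + \sum_{i=1}^2[f_i(x_i^k) + \langle x_3^k - x_i^k, \nabla f_i(x_i^k)\rangle + \frac{1}{2\gamma_i}\|x_3^k - x_i^k\|^2]$. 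On the other side, writing out $\mathcal{L}_{\frac{1}{\gamma_1},\frac{1}{\gamma_2}}(\xi^k)$ with $\beta_i = 1/\gamma_i$ gives $f_3(x_3^k) + \sum_{i=1}^2[f_i(x_i^k) + \langle \mu_i^k, x_i^k - x_3^k\rangle + \frac{1}{2\gamma_i}\|x_i^k - x_3^k\|^2]$, where $\mu_1^k = \gamma^{-1}(x_1^k - z_1^k)$ and $\mu_2^k = \gamma^{-1}(\alpha(x_2^k - x_1^k) - z_2^k)$. Since the $f_3$ terms and the quadratic terms already agree, the proof reduces to showing that the linear terms coincide, i.e. that $\langle \mu_i^k, x_i^k - x_3^k\rangle = \langle x_3^k - x_i^k, \nabla f_i(x_i^k)\rangle$ for $i=1,2$, which amounts to checking $\mu_i^k = -\nabla f_i(x_i^k)$.

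The key computational step is therefore to verify the multiplier identities $\mu_i^k = -\nabla f_i(x_i^k)$ using the optimality conditions of the proximal steps. From \eqref{eq:z1^k} we have $z_1^k = \gamma \nabla f_1(x_1^k) + x_1^k$, so $x_1^k - z_1^k = -\gamma \nabla f_1(x_1^k)$, giving $\mu_1^k = \gamma^{-1}(x_1^k - z_1^k) = -\nabla f_1(x_1^k)$ as required. Similarly, from \eqref{eq:z2^k} we have $z_2^k = \gamma \nabla f_2(x_2^k) + \alpha x_2^k - \alpha x_1^k$, so $\alpha(x_2^k - x_1^k) - z_2^k = -\gamma \nabla f_2(x_2^k)$, giving $\mu_2^k = -\nabla f_2(x_2^k)$. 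Substituting these back, the linear term in the Lagrangian becomes $\langle -\nabla f_i(x_i^k), x_i^k - x_3^k\rangle = \langle \nabla f_i(x_i^k), x_3^k - x_i^k\rangle$, which matches the envelope expression exactly. Hence both sides equal the same explicit quantity and the identity follows.

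There is essentially no hard part here; the only thing to be careful about is the restriction $k \geq 1$. The optimality conditions \eqref{eq:z1^k}--\eqref{eq:z2^k} hold whenever $x_1^k, x_2^k$ are generated by the $\prox$ steps of \eqref{Ryu:iteration_relaxed}, which is the case for all $k \geq 0$, so in fact the identity holds for $k \geq 0$ as well; the statement is phrased with $k\geq 1$ only because that is the range needed later, and one should simply note that the argument applies verbatim for every iteration index where the updates are defined. I would also remark that the well-definedness of $x_3^k$ (hence of $\RE(z_1^k,z_2^k)$ as a finite number) is guaranteed by $\gamma \in (0, \frac{1}{L_1+L_2})$ via Remark~\ref{remark:welldefinedprox} and Proposition~\ref{p:loc-lip}, and that the use of Proposition~\ref{lemma:Ryu:envelope-derivation} to identify $x_3^k$ as the envelope's minimizer is what ties the two expressions together.
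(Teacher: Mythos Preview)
Your proof is correct and follows essentially the same approach as the paper: both use the optimality conditions \eqref{eq:z1^k}--\eqref{eq:z2^k} to rewrite $\nabla f_i(x_i^k)$ in terms of $x_i^k,z_i^k$ (equivalently, to show $\mu_i^k=-\nabla f_i(x_i^k)$) and then match the resulting expression with the Lagrangian. The paper's proof is simply a one-line version of what you wrote out in full.
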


\begin{proof}
    From \eqref{eq:z1^k}--\eqref{eq:z2^k}, $\nabla f_1(x_1^k) = \gamma^{-1}(z_1^k-x_1^k)$ and $\nabla f_2(x_2^k) = \gamma^{-1}(z_2^k-\alpha (x_2^k-x_1^k))$. Substituting these identities into \eqref{eq:RE} yields the result.
\end{proof}

We are now ready to state the second main result of this paper. We follow the approach taken in \cite{themelis2020douglas,Atenas25}.

\begin{theorem}[Subsequential convergence of nonconvex Ryu's three-operator splitting] \label{t:subseq-conv}
Suppose that Assumption~\ref{a:blanket} holds. Let $\alpha \in (\underline{\alpha},1)$ where $\underline{\alpha} \coloneqq \frac{2\lambda - 3 + \sqrt{9 - 4\lambda}}{2}$, and $\lambda\in (0,2)$. Let  $\gamma \in \Gamma$ such that {$\gamma \leq  \min \left\{ \frac{\alpha}{L_1},\frac{1-\alpha}{L_2} \right\} $}, where $\Gamma$ is given in \eqref{eq:stepsizeinterval}. Then, for any  sequence $((x_1^k,x_2^k,x_3^k), (z_1^k,z_2^k))_k$ generated by algorithm \eqref{Ryu:iteration_relaxed}, then
    \begin{itemize}
        \item[(i)] $x_i^{k+1} - x_i^k \to 0$ for $i=1,2,3$, and $z_i^{k+1} - z_i^k \to 0$ and $x_3^k - x_i^k \to 0$ for $i = 1,2$.
\end{itemize} 

 \noindent If, in addition, $((x_1^k,x_2^k,x_3^k), (z_1^k,z_2^k))_k$ is bounded, then
 \begin{itemize}
 \item[(ii)] For any cluster point $x^*$ of $(x_3^k)_k$,  $\varphi_{\gamma}(z^k) \to \varphi(x^*)$, and $f_1(x_1^k) + f_2(x_2^k)+ f_3(x_3^k) \to \varphi(x^*)$.
        \item[(iii)] All cluster points of the sequences $(x_i^k)_k$, for $i=1,2,3$, coincide and are critical points of problem \eqref{primal-problem}. 
    \end{itemize} 
\end{theorem}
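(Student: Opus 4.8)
The plan is to combine the sufficient descent property (Theorem~\ref{thm:Ryu_sufficient_descent}) with the sandwich bounds (Proposition~\ref{Ryu:sandwich}), the fixed-point/criticality characterization (Proposition~\ref{p:fix-crit}), continuity of the envelope (Proposition~\ref{p:loc-lip}), and the closedness of the subdifferential in the standard envelope-based scheme of \cite{themelis2020douglas,Atenas25}. First I would prove (i): by Theorem~\ref{thm:Ryu_sufficient_descent}, the sequence $(\RE(z_1^k,z_2^k))_k$ is nonincreasing; it is bounded below because, under the additional stepsize restriction $\gamma \le \min\{\alpha/L_1,(1-\alpha)/L_2\}$, Proposition~\ref{Ryu:sandwich}(iii) gives $\RE(z_1^k,z_2^k) \ge \varphi(x_3^k) \ge \min\varphi > -\infty$. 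Hence $\RE(z^k)$ converges and the telescoping of the sufficient descent inequality forces $\|z_i^{k+1}-z_i^k\|\to 0$ for $i=1,2$. Then $\Delta z_i^k\to 0$ together with the bounds $\|\Delta z_1^k\|^2 \le (1+L_1\gamma)^2\|\Delta x_1^k\|^2$ (wait — this is the wrong direction); instead I use \eqref{eq:suffdescent_almost2}, i.e. $\RE(z^k)-\RE(z^{k+1}) \ge C_2\|\Delta x_1^k\|^2 + C_3\|\Delta x_2^k\|^2$ with $C_2,C_3>0$, so $\Delta x_1^k\to 0$ and $\Delta x_2^k\to 0$. Since $x_3^k - x_i^k = \lambda^{-1}\Delta z_i^k \to 0$ for $i=1,2$, we get $x_3^{k+1}-x_3^k \to 0$ as well (writing $x_3^{k+1}-x_3^k = (x_3^{k+1}-x_1^{k+1}) + \Delta x_1^k + (x_1^k - x_3^k)$), completing (i).

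For (ii), assume boundedness and let $x^*$ be a cluster point of $(x_3^k)_k$ along a subsequence $k\in K$. By (i), the subsequences $(x_1^k)_{k\in K}$ and $(x_2^k)_{k\in K}$ also converge to $x^*$, and the corresponding $(z_1^k,z_2^k)_{k\in K}$ converges to some $(\bar z_1,\bar z_2)$ with $\bar x_1 = \bar x_2 = \bar x_3 = x^*$ in the sense of \eqref{eq:T-x-stationary} (using the continuity/single-valuedness of $\prox_{\gamma f_1}$, $\prox_{\gamma/\alpha\, f_2}$ and outer semicontinuity of $\prox_{\gamma f_3}$). Since $\RE(z^k)$ converges (monotone and bounded below) and $\RE$ is continuous by Proposition~\ref{p:loc-lip}, the whole sequence $\RE(z^k) \to \RE(\bar z_1,\bar z_2) = \varphi(x^*)$, where the last equality is Proposition~\ref{p:fix-crit}. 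For the second claim, from the exact expression \eqref{eq:RE} evaluated at the minimizer $y = x_3^k$ together with Lemma~\ref{descent-lemma} one sandwiches $f_1(x_1^k)+f_2(x_2^k)+f_3(x_3^k)$ between $\RE(z^k)$ plus vanishing $O(\|x_3^k-x_i^k\|^2)$ terms (both from Proposition~\ref{Ryu:sandwich}(ii) and from bounding $\langle \nabla f_i(x_i^k), x_3^k - x_i^k\rangle$ and $\|x_3^k - x_i^k\|^2$), all of which go to zero by (i); hence $f_1(x_1^k)+f_2(x_2^k)+f_3(x_3^k)\to \varphi(x^*)$. (Here one must also use lower semicontinuity of $f_3$ to get $\liminf f_3(x_3^k) \ge f_3(x^*)$, combined with the upper bound from $\RE$, to pin down the limit.)

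For (iii), fix a cluster point: by (i) all of $(x_1^k)_k,(x_2^k)_k,(x_3^k)_k$ have the same cluster points, so it suffices to work with a convergent subsequence $x_i^k \to x^*$, $z_i^k \to \bar z_i$ ($k\in K$) as above. Pass to the limit in the first-order optimality conditions: \eqref{eq:z1^k}--\eqref{eq:z2^k} give $\nabla f_1(x^*) = \gamma^{-1}(\bar z_1 - x^*)$ and $\nabla f_2(x^*) = \gamma^{-1}(\bar z_2 - \alpha(x^* - x^*)) = \gamma^{-1}\bar z_2$ by continuity of $\nabla f_1,\nabla f_2$; and \eqref{Ryu:OC3} reads $v^k := -\gamma^{-1}(x_3^k - x_1^k + z_1^k - x_2^k + z_2^k) \in \partial f_3(x_3^k)$, where $v^k \to -\gamma^{-1}(\bar z_1 + \bar z_2 - x^*)$ along $K$. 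To invoke outer semicontinuity of $\partial f_3$ (closedness of its graph, \cite[Theorem 8.6]{Rockafellar_Wets_2009}) I also need $f_3(x_3^k) \to f_3(x^*)$ along $K$, which is exactly what part (ii) supplies (via $f_1(x_1^k)+f_2(x_2^k)\to f_1(x^*)+f_2(x^*)$ by continuity and $f_1+f_2+f_3 \to \varphi(x^*)$). Thus $-\gamma^{-1}(\bar z_1 + \bar z_2 - x^*) \in \partial f_3(x^*)$, and adding the two gradient identities yields $0 \in \nabla f_1(x^*) + \nabla f_2(x^*) + \partial f_3(x^*)$, i.e. $x^*$ is critical for \eqref{primal-problem} by Remark~\ref{r:critical}.

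The main obstacle I expect is the attentive-convergence requirement for the limiting subdifferential of $f_3$: closedness of $\operatorname{graph}\partial f_3$ only transfers an inclusion in the limit if $f_3(x_3^k)\to f_3(x^*)$, not merely $x_3^k\to x^*$. This is why part (ii) — in particular establishing $f_1(x_1^k)+f_2(x_2^k)+f_3(x_3^k)\to\varphi(x^*)$ rather than just the envelope convergence — is the real engine of the argument, and it relies on carefully sandwiching the function values using Proposition~\ref{Ryu:sandwich}, the vanishing of $\|x_3^k-x_i^k\|$, lower semicontinuity of $f_3$, and continuity of $f_1,f_2$. Everything else is a routine limiting argument once (i) and the boundedness hypothesis are in hand.
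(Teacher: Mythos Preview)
Your proposal is correct and follows the same overall scheme as the paper (descent of the envelope $\Rightarrow$ vanishing increments $\Rightarrow$ sandwich bounds pin down function values $\Rightarrow$ pass to the limit in the optimality conditions). Two genuine differences in execution are worth noting.

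First, for $\Delta x_i^k\to 0$ in part~(i) you reach back into the proof of Theorem~\ref{thm:Ryu_sufficient_descent} for the intermediate inequality \eqref{eq:suffdescent_almost2}; the paper instead uses only the stated conclusion $\|\Delta z_i^k\|\to 0$ together with nonexpansiveness of $\prox_{\gamma f_1}$ and $\prox_{\gamma/\alpha\,f_2}$ to deduce $\|\Delta x_1^k\|\le\|\Delta z_1^k\|$ and $\|\Delta x_2^k\|\le \alpha^{-1}\|\Delta z_2^k\|+\|\Delta x_1^k\|$. The paper's route is slightly cleaner since it does not reopen an earlier proof.

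Second, and more substantively, the paper introduces the augmented Lagrangian $\mathcal{L}_{1/\gamma_1,1/\gamma_2}$ (Lemma~\ref{l:lag}) and routes both the function-value convergence $f_1(x_1^k)+f_2(x_2^k)+f_3(x_3^k)\to\varphi(x^*)$ in~(ii) and the criticality argument in~(iii) through the subdifferential of $\mathcal{L}$. You bypass the Lagrangian entirely: for~(ii) you use outer semicontinuity of $\prox_{\gamma f_3}$ to identify $(\bar z_1,\bar z_2)$ as a fixed point and then invoke Proposition~\ref{p:fix-crit}, and for~(iii) you pass directly to the limit in \eqref{eq:z1^k}, \eqref{eq:z2^k}, \eqref{Ryu:OC3}, correctly noting that closedness of $\operatorname{graph}\partial f_3$ requires the attentive convergence $f_3(x_3^k)\to f_3(x^*)$ supplied by~(ii). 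Your route is more elementary and self-contained for the subsequential result at hand; the paper's Lagrangian formulation is the natural object for the K\L{}-based global convergence analysis alluded to in the remark following the theorem, so its introduction here is forward-looking rather than strictly necessary.
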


\begin{proof}
    From the assumption, $\min \varphi > -\infty$, then   Proposition~\ref{p:fix-crit} implies $(\RE(z_1^k,z_2^k))_k$ is a bounded non-increasing sequence, thus convergent to some  $\varphi^{\star}_{\gamma} \in \R$. As a consequence, for $i = 1,2$, Theorem~\ref{thm:Ryu_sufficient_descent} yields $z_i^{k+1} - z_i^k \to 0$, and the $z_i^k$-update rule in \eqref{Ryu:iteration_relaxed} gives $x_3^k - x_i^k \to 0$. Since $f_1$ is convex, then $\prox_{\gamma f_1}$ is nonexpansive. In particular, \begin{equation*}
        \|x_1^{k+1} - x_1^k\| \leq \|z_1^{k+1} - z_1^k\|,
    \end{equation*} so that $x_1^{k+1} - x_1^k \to 0$. Likewise, $\prox_{\gamma f_2}$ is nonexpansive, yielding \begin{equation*}
        \|x_2^{k+1} - x_2^k\| \leq \dfrac{1}{\alpha}\|z_2^{k+1} - z_2^k\| + \|x_1^{k+1} - x_1^k\|,
    \end{equation*} and thus $x_2^{k+1} - x_2^k \to 0$. Moreover, as \begin{equation*}
        x_3^{k+1} - x_3^k = x_3^{k+1} - x_2^{k+1} + x_2^{k+1} - x_2^k + x_2^k - x_3^k,
    \end{equation*} then $x_3^{k+1} - x_3^k \to 0$ as well. This proves (i). Next,  observe that $x_3^k - x_1^k \to 0$ and $x_3^k - x_2^k \to 0$ imply that the sequences $(x_1^k)_k$, $(x_2^k)_k$ and $(x_3^k)_k$ have the same cluster points. Let $x^*$ be a cluster point of $(x_3^k)_k$ and $(z_1^*,z_2^*)$ be a cluster point of $(z_1^k,z_2^k)_k$, and let $x_i^{k_j} \to x^*$ for $i=1,2,3$, and $z_i^{k_j} \to z_i^*$ for $i=1,2$. Note that from continuity of the proximal operator, $x^* = \prox_{\gamma f_1}(z_1^*)$ and $x^*=\prox_{\frac{\gamma}{\alpha} f_2}(\frac{z_2^*}{\alpha} + x^*)$. Then \begin{equation*}
    \begin{array}{rll}
        \varphi(x^*) &\ds  \leq \liminf_j \varphi(x_3^{k_j}) & \quad \text{($\varphi$ is lsc)} \\
        &\ds   \leq \limsup_j \varphi(x_3^{k_j}) &  \\
        & \ds  \leq \limsup_j \RE(z_1^{k_j}, z_2^{k_j}) &  \quad \text{(Proposition~\ref{Ryu:sandwich}(ii))} \\
        &\ds  = \RE(z_1^*,z_2^*) & \quad \text{(Proposition~\ref{p:loc-lip})} \\
        & \leq \varphi(x^*).  & \quad \text{(Proposition~\ref{Ryu:sandwich}(i))}
    \end{array}
    \end{equation*} Hence, $\varphi^{\star}_{\gamma} = \lim_k \RE(z_1^k, z_2^k) = \lim_j \varphi(x_3^{k_j})  =\varphi(x^*)$. Furthermore, from Lemma~\ref{l:lag}, $\lim_k \mathcal{L}_{\frac{1}{\gamma_1},\frac{1}{\gamma_2}}(\xi^k) = \varphi(x^*)$. Since $(x_1^k)_k$, $(x_2^k)_k$, $(z_1^k)_k$ and $(z_2^k)_k$ are bounded, then part (i) implies $\lim_k \mathcal{L}_{\frac{1}{\gamma_1},\frac{1}{\gamma_2}}(\xi^k) = \lim_k f_1(x_1^k) + f_2(x_2^k)+ f_3(x_3^k) $, from where (ii) follows. Earlier in the proof we already showed the first part of item (iii). It remains to prove that $x^*$ is a critical point. A simple computation shows that for any $g_3 \in \partial f_3(x_3)$, \begin{multline*}
    \begin{pmatrix}
        \nabla f_1(x_1) + \mu_1 + \dfrac{1}{\gamma_1}(x_1 - x_3)\\ \nabla f_2(x_2) + \mu_2 + \dfrac{1}{\gamma_2}(x_2 - x_3)\\ g_3 - \left(\mu_1 + \mu_2 + \frac{1}{\gamma_1}(x_3 - x_1) + \frac{1}{\gamma_2}(x_3 - x_2)\right)
    \end{pmatrix}
           \\ \in \hat{\partial}_{(x_1,x_2, x_3)} \mathcal{L}_{\frac{1}{\gamma_1},\frac{1}{\gamma_2}}(x_1,x_2, x_3, \mu_1, \mu_2).
    \end{multline*} 
    Hence, taking $x_i = x_i^{k_j}$ for $i=1,2,3$, $$\mu_1^{k_j} = \gamma^{-1}(x_1^{k_j} - z_1^{k_j})\text{, and }\mu_2^{k_j} = \gamma^{-1}(\alpha (x_2^{k_j}-x_1^{k_j}) - z_2^{k_j}).$$ Then, in view of \eqref{eq:z1^k}, \eqref{eq:z2^k}, and \eqref{Ryu:OC3}, we get \begin{equation*}
    \begin{pmatrix}
         \dfrac{1}{\gamma_1}(x_1^{k_j} - x_3^{k_j})\\  \dfrac{1}{\gamma_2}(x_2^{k_j} - x_3^{k_j})\\ \dfrac{2\alpha}{\gamma}(x_1^{k_j}-x_2^{k_j})+\dfrac{2}{\gamma}(x_2^{k_j} - x_3^{k_j})
    \end{pmatrix}
            \in \hat{\partial}_{(x_1,x_2, x_3)} \mathcal{L}_{\frac{1}{\gamma_1},\frac{1}{\gamma_2}}(x_1^{k_j},x_2^{k_j}, x_3^{k_j}, \mu_1^{k_j}, \mu_2^{k_j}).
    \end{equation*} Taking the limit as $j \to \infty$, since  $x_i^{k_j} \to x^*$ for $i=1,2,3$, and $z_i^{k_j} \to z_i^*$ for $i=1,2$, item (i) then implies  \begin{equation*}
    \begin{pmatrix}
         0\\  0\\ 0
    \end{pmatrix}
            \in \partial_{(x_1,x_2, x_3)} \mathcal{L}_{\frac{1}{\gamma_1},\frac{1}{\gamma_2}}(x^{*},x^{*}, x^{*}, \mu_1^{*}, \mu_2^{*}),
    \end{equation*} where $\mu_1^* = \gamma^{-1}(x^* - z_1^*)$, and $\mu_2^* = -\gamma^{-1} z_2^*$. In turn, this inclusion is equivalent to the following conditions: \begin{align*}
        0 =& \nabla f_1(x^*) + \gamma^{-1}(x^* - z_1^*), \\
        0 =& \nabla f_2(x^*) -\gamma^{-1} z_2^*, \\
        0 \in & \partial f_3(x^*) -  \gamma^{-1}(x^* - z_1^* - z_2^*).
    \end{align*} Addind these equations yields $0\in  \nabla f_1(x^*) + \nabla f_2(x^*) + \partial f_3(x^*) = \partial \varphi (x^*)$, concluding the proof.
\end{proof}

\begin{remark}[Boundedness of the generated sequences] Using a similar argument to \cite[Theorem 3.4(iii)]{themelis2018forward}, we can show that if $\varphi$ has bounded level sets, then $\RE$ also has bounded level sets. Since for appropriately chosen stepsize, $(\RE(z_1^k, z_2^k))_k$ is a non-{increasing} sequence as shown in Theorem~\ref{thm:Ryu_sufficient_descent}, then for all $k\geq 1$, $(z_1^k, z_2^k) \in \{(z_1,z_2): \RE(z_1,z_2) \leq \RE(z_1^0,z_2^0)\}$, and thus $(z_1^k, z_2^k)_k$ is bounded. Moreover, boundedness of $(x_1^k,x_2^k)_k$ follows from the continuity properties of $\prox_{\gamma f_1}$ and $\prox_{\frac{\gamma}{\alpha} f_2}$, and boundedness of $(x_3^k)_k$ is a consequence of Theorem~\ref{t:subseq-conv}(i).
    
\end{remark}

\begin{remark}[Global convergence] In Theorem~\ref{t:subseq-conv}, we establish subsequential convergence of the relaxed variant of Ryu's three-operator splitting method in a specific nonconvex setting. A natural question is whether the method converges globally. This can be affirmatively addressed by adopting the now standard approach based on the Kurdyka--\L{}ojasiewicz (K\L{}) inequality~\cite{attouch2013convergence}, or alternatively, by leveraging the subdifferential-based error bound technique used in~\cite{Atenas25} within the unifying framework proposed in~\cite{atenas2023unified}. Under the same set of assumptions, one can also obtain local linear convergence rates.
\end{remark}

\section{{Numerical experiments}} \label{s:numerics}
In this section, we provide a numerical experiment to demonstrate the theoretical findings. The experiment we consider is nonnegative matrix completion/factorization
\cite{lee1999learning}, whose goal is to recover missing entries of a partially
observed matrix using nonnegative entries, and we impose the additional
structure requirement that the recovered matrix should be low-rank. 
\medskip

\noindent \textbf{Problem formulation.} Given a data matrix $M\in\mathbb{R}^{m\times n}$, let $\Omega$ be the set of observed
indices and $P_\Omega$ the associated projection, $(P_\Omega(X))_{ij}=X_{ij}$ if
$(i,j)\in\Omega$ and $0$ otherwise. We consider \eqref{primal-problem} with
\begin{align*}
f_1(X) &= \frac{\lambda_1}{2}\min_{Y\in\mathbb{R}^{m\times n}_+}\!\|X-Y\|_F^2,
\\
f_2(X) &= \tfrac12\|P_\Omega(X-M)\|_F^2,
\\
f_3(X) &= \lambda_2\sum_i \phi_{\mathrm{MCP}}\!\big(\sigma_i(X);\tau\big),
\end{align*}
where $\lambda_1,\lambda_2\ge 0$, $\|\cdot\|_F$ is the Frobenius norm,
$\{\sigma_i(X)\}$ are the singular values of $X$, and
$\phi_{\mathrm{MCP}}(\cdot;\tau)$ is the scalar minimax–concave penalty with $\tau>0$,
\[
\phi_{\mathrm{MCP}}(t;\tau)=
\begin{cases}
|t|-\dfrac{t^2}{2\tau}, & |t|\le \tau,\\[6pt]
\dfrac{\tau}{2}, & |t|>\tau.
\end{cases}
\]
This is the nonconvex {spectral MCP} penalty (a nonconvex alternative to the
nuclear norm; as $\tau\to\infty$, $f_3$ reduces to $\lambda_2\|X\|_*$). Note that $f_1$ is $\lambda_1$-smooth and $f_2$ is 1-smooth. The proximal operator of $f_1$ can be found in \cite{ProxRep},  the proximal operator of $f_2$ can be easily derived as 
\begin{equation*}
    \prox_{
\gamma f_2}(X) = X - \frac{\gamma}{1+\gamma}P_\Omega(X-M),
\end{equation*}
and the proximal operator of $f_3$ given in \cite[Corollary 2.1]{jin2016alternating}. 
\medskip 

\noindent \textbf{Problem data.} We follow \cite{toh2010accelerated} to generate $M\in\mathbb{R}^{m\times n}$ with rank
$r$ as the product of an $m\times r$ and an $r\times n$ matrix with i.i.d.\ standard
Gaussian entries, then select $s$ entries uniformly at random to form $\Omega$.
We set $m=n$ and consider $(n,s)\in\{(100,1000),(300,10000)\}$, fix $r=10$,
$\lambda_1=10$, $\lambda_2=5$, and use $\tau=100$ in $\phi_{\mathrm{MCP}}$.

\medskip
\noindent \textbf{Algorithm parameters.}
We implement the Ryu–splitting algorithm \eqref{Ryu:iteration_relaxed} with $\lambda=1$.
By Lemma~\ref{lemma:lower_alpha_epsilons}, $\underline{\alpha}=(-1+\sqrt{5})/2$, and we set
$\alpha=0.99\in(\underline{\alpha},1)$.
We choose $\varepsilon_1,\varepsilon_2>0$ to maximize the admissible step size
$\min\{\bar\gamma_1,\bar\gamma_2,\bar\gamma_3\}$: for fixed $\varepsilon_1$, we take
$\varepsilon_2=\varepsilon_2^{\star}(\varepsilon_1)$ as the positive solution of
$\bar\gamma_1=\bar\gamma_2$, then select $\varepsilon_1$ by solving
$\bar\gamma_1(\varepsilon_2^{\star}(\varepsilon_1))=\bar\gamma_3(\varepsilon_1)$ on
$\bigl(\alpha/(2\alpha-\lambda),(2-\lambda)/(1-\alpha)\bigr)$; if no interior solution
exists, we use the endpoint that yields the larger minimum.
With this, we set
\[
\gamma \;=\; \gamma_{\mathrm{Ryu}}
\;\coloneqq\; 0.99\,\min\!\left\{\bar\gamma_0,\,\bar\gamma_1,\,\bar\gamma_2,\,\bar\gamma_3,\,
\frac{1}{L_1+L_2}\right\},
\]
where $\bar\gamma_0=\frac{1}{2L_1}$.
For comparison, we also run Davis–Yin splitting~\cite{davis2017three} with the step size
$\gamma_{\mathrm{DYS}}$ prescribed in~\cite{ALT24} for the nonconvex model \eqref{primal-problem}.

We additionally consider an adaptive variant, denoted \emph{Ryu splitting}${+}$:
following \cite[Remark~4]{li2016douglas}, we initialize $\gamma=\gamma_0>0$ and, whenever
$\gamma>\gamma_{\mathrm{Ryu}}$ and either
$\min\{\|x_1^k-x_1^{k-1}\|,\ \|x_2^k-x_2^{k-1}\|\}>\tfrac{c_0}{k}$ or
$\min\{\|x_1^k\|,\ \|x_2^k\|\}>c_1$, we reduce $\gamma$ by a constant factor.
This preserves the guarantees of Theorem~\ref{t:subseq-conv}.
In our experiments, we set $\gamma_0=10\,\gamma_{\mathrm{DYS}}$, $c_0=10^{3}$,
$c_1=10^{10}$, and update $\gamma\leftarrow \max\{\gamma/2,\;0.99\,\gamma_{\mathrm{Ryu}}\}$. 

We terminate the algorithms when the fixed-point residual of the composite gradient–prox mapping falls
below a tolerance:
\[
\texttt{Res}(x)\;=\;\bigl\|\,x - \prox_{\hat{\gamma} f_3}\bigl(x-\hat{\gamma}\,(\nabla f_1(x)+\nabla f_2(x))\bigr)\,\bigr\|
\;<\;\varepsilon.
\]
We use $\hat{\gamma}=5\times 10^{-3}$, $\varepsilon=10^{-3}$, or stop after $20{,}000$ iterations.
\medskip

\noindent \textbf{Results.}
Table~\ref{tab:dys-ryu} reports averages over five random instances. With the theoretically prescribed step size from~\cite{ALT24}, the nonconvex DYS method reaches the target residual in remarkably fewer iterations and CPU time than Ryu splitting with the step size required by Theorem~\ref{t:subseq-conv}; in all cases, Ryu splitting hits the iteration cap (marked by $*$). Figure~\ref{fig:compare} corroborates this, showing much slower decay of both residual and objective under Ryu splitting. The key limiting factor is the conservativeness of the theoretical step-size bound. In contrast, the adaptive variant, Ryu splitting${+}$, employs larger effective step sizes and converges substantially faster than  DYS, suggesting that the current upper bound for Ryu’s step size may be sharpened in theory.

\begin{table}[t]
\centering
\caption{Numerical comparison of Davis--Yin splitting, Ryu splitting, and Ryu splitting+. Results are averaged over five random instances. ``Ryu splitting+'' denotes the adaptive version of Ryu splitting. An asterisk ($*$) indicates that the maximum iteration limit ($20{,}000$) was reached.}
\label{tab:dys-ryu}
\setlength{\tabcolsep}{6pt}
\begin{tabular}{lrrrrrrrrr}
\toprule
& \multicolumn{3}{c}{$n=100,\ s=1000$} &  \multicolumn{3}{c}{$n=300,\ s=10000$} \\
\cmidrule(lr){2-4}\cmidrule(lr){5-7}
Algorithm & iter & time & obj & iter & time & obj\\
\midrule
Davis--Yin splitting & 
    {6125} & {10.06} & {3534.1} & {14331} & {228.58} & {28137}   \\
Ryu splitting &
    {*20000} & {*34.05} & {*3534.1} & {*20000} & {*332.69} & {*28499}  \\
Ryu splitting+ &
    {624} & {1.17} & {3533.5} & {1555} & {26.37} & {28133} \\
\bottomrule
\end{tabular}
\end{table}

\begin{figure}[tb!]
	\centering
	\begin{tabular}{@{}cc@{}}
        \includegraphics[scale=0.4]{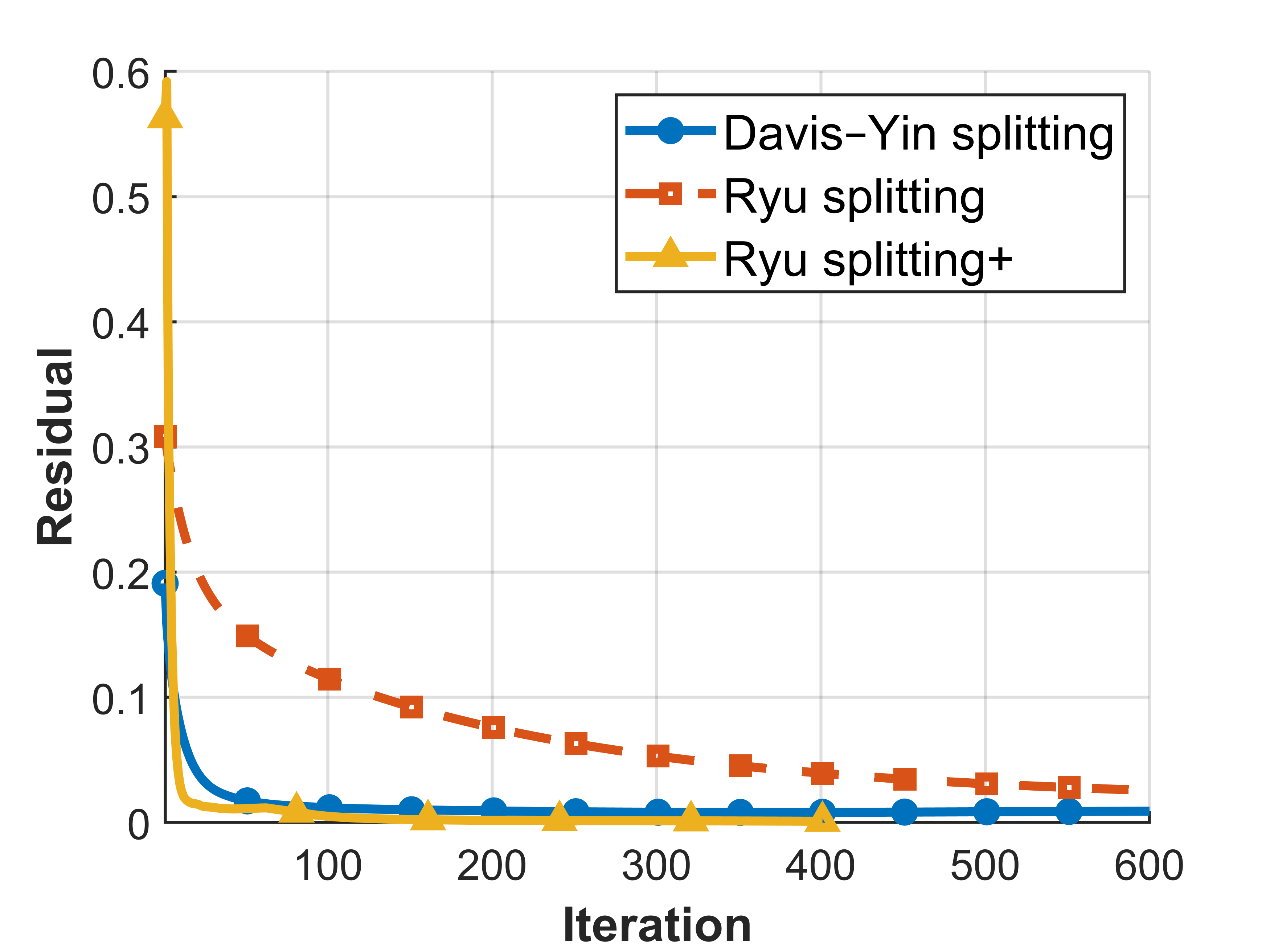}&
		\includegraphics[scale=0.4]{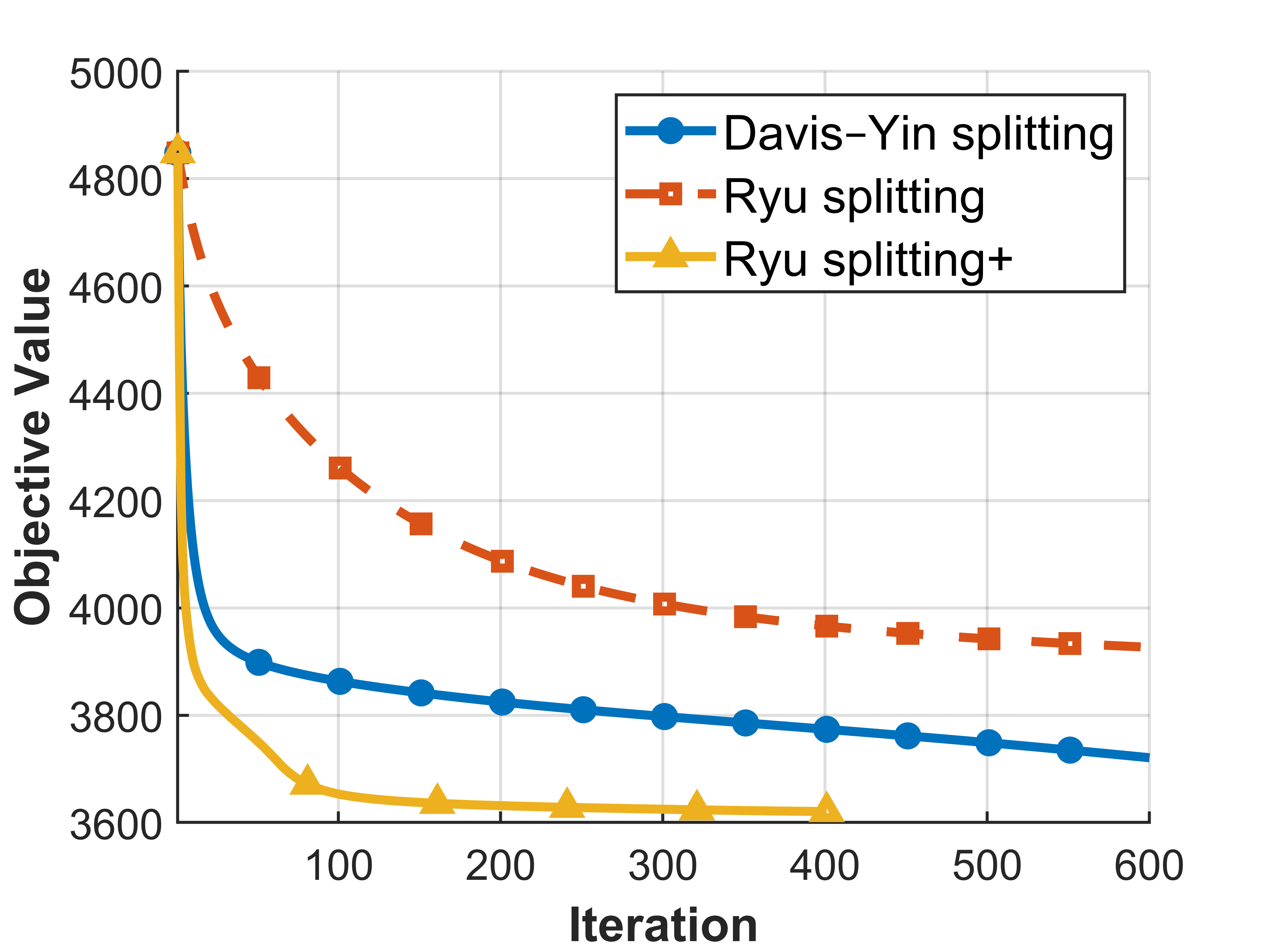}
	\end{tabular}
\caption{{Convergence on a random instance ($n=100$, $s=1000$). Left: residual vs.\ iteration; right: objective value vs.\ iteration.}}

	\label{fig:compare}
\end{figure}

\section{Conclusion} \label{s:conclusion}
By defining a Moreau-type envelope tailored to the algorithmic scheme in \eqref{Ryu:iteration_relaxed}, the core of the analysis relies on the sufficient decrease property shown in Theorem~\ref{thm:Ryu_sufficient_descent}. Observe that our analysis does not cover the limiting case $\alpha = 1$ (corresponding to the original method proposed by Ryu in \eqref{Ryu:iteration}), as it would make the stepsize interval in \eqref{eq:stepsizeinterval} empty. {This limitation occurs because, when $\alpha=1$, a key proximal term vanishes, thereby removing a structural component essential to our envelope-based descent analysis (see also Remark~\ref{rem:limitingcase}).} Nevertheless, when $\alpha$ is sufficiently close to 1, we can still guarantee global subsequential convergence for sufficiently small stepsizes. A full treatment of the limiting case $\alpha=1$ requires a more refined analysis, which is the subject of ongoing work 
by the authors.

\section*{Acknowledgments}
The authors thank the mathematical research institute MATRIX in Australia where part of this research was performed.
JHA's visit at MATRIX was supported in part by the MATRIX-Simons Travel Grant. The research of FA was supported in part by Australian Research Council grant DP230101749.

\bibliographystyle{abbrv}
\bibliography{bibfile}

\begin{thebibliography}{10}

\bibitem{ALT24}
J.~H. Alcantara, C.-p. Lee, and A.~Takeda.
\newblock A four-operator splitting algorithm for nonconvex and nonsmooth optimization.
\newblock {\em SIAM Journal on Optimization}, 35(3):1846--1872, 2025.

\bibitem{AT25}
J.~H. Alcantara and A.~Takeda.
\newblock Douglas-{R}achford algorithm for nonmonotone multioperator inclusion problems.
\newblock {\em arXiv preprint arXiv:2501.02752}, 2025.

\bibitem{Atenas25}
F.~Atenas.
\newblock Understanding the {D}ouglas--{R}achford splitting method through the lenses of {M}oreau-type envelopes.
\newblock {\em Computational Optimization and Applications}, 90:1--30, 2025.

\bibitem{atenas2023unified}
F.~Atenas, C.~Sagastiz{\'a}bal, P.~J. Silva, and M.~Solodov.
\newblock A unified analysis of descent sequences in weakly convex optimization, including convergence rates for bundle methods.
\newblock {\em SIAM Journal on Optimization}, 33(1):89--115, 2023.

\bibitem{attouch2013convergence}
H.~Attouch, J.~Bolte, and B.~F. Svaiter.
\newblock Convergence of descent methods for semi-algebraic and tame problems: proximal algorithms, forward--backward splitting, and regularized {G}auss--{S}eidel methods.
\newblock {\em Mathematical programming}, 137(1):91--129, 2013.

\bibitem{Beck17}
A.~Beck.
\newblock {\em First-Order Methods in Optimization}.
\newblock SIAM - Society for Industrial and Applied Mathematics, Philadelphia, PA, United States, 2017.

\bibitem{bian2021three}
F.~Bian and X.~Zhang.
\newblock A three-operator splitting algorithm for nonconvex sparsity regularization.
\newblock {\em SIAM Journal on Scientific Computing}, 43(4):A2809--A2839, 2021.

\bibitem{boyd2011distributed}
S.~Boyd, N.~Parikh, E.~Chu, B.~Peleato, J.~Eckstein, et~al.
\newblock Distributed optimization and statistical learning via the alternating direction method of multipliers.
\newblock {\em Foundations and Trends in Machine learning}, 3(1):1--122, 2011.

\bibitem{cai2010split}
J.-F. Cai, S.~Osher, and Z.~Shen.
\newblock Split {B}regman methods and frame based image restoration.
\newblock {\em Multiscale Modeling \& Simulation}, 8(2):337--369, 2010.

\bibitem{ProxRep}
G.~Chierchia, E.~Chouzenoux, P.~L. Combettes, and J.-C. Pesquet.
\newblock The proximity operator repository, 2016.

\bibitem{combettes2007douglas}
P.~L. Combettes and J.-C. Pesquet.
\newblock A {D}ouglas--{R}achford splitting approach to nonsmooth convex variational signal recovery.
\newblock {\em IEEE Journal of Selected Topics in Signal Processing}, 1(4):564--574, 2007.

\bibitem{combettes2011proximal}
P.~L. Combettes and J.-C. Pesquet.
\newblock {\em Proximal Splitting Methods in Signal Processing}, pages 185--212.
\newblock Springer New York, New York, NY, 2011.

\bibitem{davis2017three}
D.~Davis and W.~Yin.
\newblock A three-operator splitting scheme and its optimization applications.
\newblock {\em Set-valued and variational analysis}, 25:829--858, 2017.

\bibitem{DR}
J.~Douglas and H.~H. Rachford.
\newblock On the numerical solution of heat conduction problems in two and three space variables.
\newblock {\em Transactions of the American mathematical Society}, 82(2):421--439, 1956.

\bibitem{glowinski2017splitting}
R.~Glowinski, S.~Osher, and W.~Yin.
\newblock {\em Splitting Methods in Communication, Imaging, Science, and Engineering}.
\newblock Scientific Computation. Springer International Publishing, Cham, 2017.

\bibitem{jin2016alternating}
Z.-F. Jin, Z.~Wan, Y.~Jiao, and X.~Lu.
\newblock An alternating direction method with continuation for nonconvex low rank minimization.
\newblock {\em Journal of Scientific Computing}, 66(2):849--869, 2016.

\bibitem{lee1999learning}
D.~D. Lee and H.~S. Seung.
\newblock Learning the parts of objects by non-negative matrix factorization.
\newblock {\em Nature}, 401(6755):788--791, 1999.

\bibitem{li2016douglas}
G.~Li and T.~K. Pong.
\newblock {Douglas–Rachford} splitting for nonconvex optimization with application to nonconvex feasibility problems.
\newblock {\em Math. Program.}, 159(1-2):371--401, 2016.

\bibitem{LM}
P.-L. Lions and B.~Mercier.
\newblock Splitting algorithms for the sum of two nonlinear operators.
\newblock {\em SIAM Journal on Numerical Analysis}, 16(6):964--979, 1979.

\bibitem{MT23}
Y.~Malitsky and M.~K. Tam.
\newblock Resolvent splitting for sums of monotone operators with minimal lifting.
\newblock {\em Mathematical Programming}, 201(1):231--262, 2023.

\bibitem{Passty79}
G.~B. Passty.
\newblock Ergodic convergence to a zero of the sum of monotone operators in {H}ilbert space.
\newblock {\em Journal of Mathematical Analysis and Applications}, 72(2):383--390, 1979.

\bibitem{Rockafellar_Wets_2009}
R.~T. Rockafellar and R.~J.-B. Wets.
\newblock {\em Variational Analysis}, volume 317 of {\em Grundlehren der Mathematischen Wissenschaften}.
\newblock Springer Verlag Berlin, Berlin, 3rd printing edition, 2009.

\bibitem{ryu2020uniqueness}
E.~K. Ryu.
\newblock Uniqueness of {DRS} as the 2 operator resolvent-splitting and impossibility of 3 operator resolvent-splitting.
\newblock {\em Mathematical Programming}, 182:233--273, 2020.

\bibitem{themelis2020douglas}
A.~Themelis and P.~Patrinos.
\newblock Douglas--{R}achford splitting and {ADMM} for nonconvex optimization: Tight convergence results.
\newblock {\em SIAM Journal on Optimization}, 30(1):149--181, 2020.

\bibitem{themelis2018forward}
A.~Themelis, L.~Stella, and P.~Patrinos.
\newblock Forward-backward envelope for the sum of two nonconvex functions: Further properties and nonmonotone linesearch algorithms.
\newblock {\em SIAM Journal on Optimization}, 28(3):2274--2303, 2018.

\bibitem{toh2010accelerated}
K.-C. Toh and S.~Yun.
\newblock An accelerated proximal gradient algorithm for nuclear norm regularized linear least squares problems.
\newblock {\em Pacific Journal of optimization}, 6(615-640):15, 2010.

\end{thebibliography}

\end{document}